\newcommand\restr[2]{{
  \left.\kern-\nulldelimiterspace 
  #1 
  \littletaller 
  \right|_{#2} 
  }}
\newcommand{\littletaller}{\mathchoice{\vphantom{\big|}}{}{}{}}
\renewenvironment{proof}[1][Proof]{\noindent\textit{#1. } }{\hfill$\square$}
\newtheorem{lemma}{\sc Lemma}[subsection]
\journal{-}
\begin{document}

\begin{frontmatter}



\title{Pressure--Poisson Equation in Numerical Simulation of Cerebral Arterial Circulation and Its Effect on the Electrical Conductivity of the  Brain}


\author[inst1]{Maryam Samavaki\corref{cor1}}
\ead{maryamolsadat.samavaki@tuni.fi}
\cortext[cor1]{Corresponding author at: Sähkötalo building, Korkeakoulunkatu 3, Tampere, 33720, FI}

\affiliation[inst1]{organization={Mathematics, Computing Sciences, Tampere University},
            addressline={Korkeakoulunkatu 1}, 
            city={Tampere University},
            postcode={33014}, 
            country={Finland}}
\affiliation[inst3]{organization={Faculty of  Mathematics, K. N. Toosi University of Technology},
            addressline={Mirdamad Blvd, No. 470}, 
            city={Tehran},
            postcode={1676-53381}, 
            country={Iran}}
 \affiliation[inst2]{organization={Aix Marseille Univ}, addressline={ CNRS,  Centrale Marseille,  Institut Fresnel},city={Marseille},country={France}}  
\author[inst1,inst2]{Yusuf Oluwatoki Yusuf} 
 
\author[inst3]{Arash Zarrin nia}

\author[inst1]{Santtu S\"{o}derholm}

\author[inst1]{Joonas Lahtinen}

\author[inst1]{Fernando Galaz Prieto}

\author[inst1]{Sampsa Pursiainen}

\begin{abstract}
\textbf{Background and Objective:} This study considers dynamic modelling of the cerebral arterial circulation and reconstructing an atlas for the electrical conductivity of the brain. Electrical conductivity is a governing parameter in several electrophysiological modalities applied in neuroscience, such as electroencephalography (EEG), transcranial electrical stimulation (tES), and electrical impedance tomography (EIT). While high-resolution 7-Tesla (T) Magnetic Resonance Imaging (MRI) data allow for reconstructing the cerebral arteries with a cross-sectional diameter larger than the voxel size, electrical conductivity cannot be directly inferred from MRI data.
Brain models of electrophysiology typically associate each brain tissue compartment with a constant electrical conductivity, omitting any dynamic effects of cerebral blood circulation. Incorporating those effects poses the challenge of solving a system of incompressible Navier--Stokes equations (NSEs) in a realistic multi-compartment head model. However, using a simplified circulation model is well-motivated since, on the one hand, the complete system does not always have a numerically stable solution and, on the other hand, the full set of arteries cannot be perfectly reconstructed from the MRI data, meaning that any solution will be approximative. 

\noindent \textbf{Methods:} We postulate that circulation in the distinguishable arteries can be estimated via the pressure--Poisson equation (PPE), which is coupled with Fick's law of diffusion for microcirculation. To establish a fluid exchange model between arteries and microarteries, a boundary condition derived from the Hagen--Poisseuille model is applied. The relationship between the estimated \textit{volumetric blood concentration} and the \textit{electrical conductivity of the brain tissue} is approximated through Archie's law for fluid flow in porous media.

\noindent \textbf{Results:} Through the formulation of the PPE and a set of boundary conditions (BCs) based on the Hagen--Poisseuille model, we obtained an equivalent formulation of the incompressible Stokes equation (SE). Thus, allowing effective blood pressure estimation in cerebral arteries segmented from open 7T MRI data.

\noindent \textbf{Conclusions:} As a result of this research, we developed and built a useful modelling framework that accounts for the effects of dynamic blood flow on a novel MRI-based electrical conductivity atlas.  The electrical conductivity perturbation obtained in numerical experiments has an appropriate overall match with previous studies on this subject. Further research to validate these results will be necessary.



\end{abstract}



\begin{highlights}
\item Blood flow in cerebral arterial circulation is approximated by solving pressure--Poisson equation. 
\item An estimate for the volumetric blood concentration in microcirculation is obtained via Fick's law.
\item The effective electrical conductivity of the blood-tissue mixture is mapped using Archie's law.
\end{highlights}

\begin{keyword}
Navier--Stokes equations; Pressure--Poisson equation; Cerebral blood flow; Archie's law; Fick's law; Electrical conductivity atlas
\end{keyword}

\end{frontmatter}


\section{Introduction}
\label{sec:introduction}

This study focuses on the dynamic modelling of the cerebral arterial circulation \cite{caro2012mechanics}  and reconstructing an atlas for the electrical conductivity of the brain \cite{mai2015atlas}. Electrical conductivity is a governing parameter in several electrophysiological modalities targeting the brain, for example, electroencephalography (EEG) \cite{niedermeyer2004},  transcranial electrical stimulation (tES) \cite{herrmann2013transcranial}, and electrical impedance tomography (EIT) \cite{cheney1999electrical,moura2021anatomical,lahtinen2023silico}. 
Brain models of electrophysiology conventionally associate each head tissue compartment with a constant electrical conductivity  \cite{dannhauer2010,knosche2022eeg,ilmoniemi2019brain,demunck2012}, omitting the dynamic effects of cerebral circulation.
While high-resolution 7-Tesla (T) MRI data allows for the reconstruction of cerebral arteries with a cross-sectional diameter greater than the voxel size \cite{fiederer2016,svanera2021cerebrum}, electrical conductivity cannot be directly inferred based on MRI data.  
Advancements in dynamic conductivity modelling have addressed this critical gap in our understanding; in recent years, there has been growing interest in utilizing dynamic conductivity modelling to enhance our understanding of cerebral blood flow (CBF) regulation and its applications.
\\
 Incorporating blood flow effects is an important and timely topic that has most prominently been approached via rheo\-en\-ce\-pha\-lo\-grap\-hy \cite{bodo2018measurement},  EIT \cite{zhang2022pilot,ke2022advances}, and diffusion-weighted MRI \cite{lee2020extracellular} measurements. Alternatively, a dynamic model can be constructed {\em in silico} through blood flow simulation, as recently suggested in \cite{beraldo2020time,moura2021anatomical,lahtinen2023silico}. To this end, recent studies have, for instance, involved predicting the hydraulic conductivity of the capillary network, which is a critical factor in understanding microvascular dynamics~\cite{Sweeney2022.11.23.517681}.
 \\
 Modelling CBF poses the challenge of solving a system of Navier--Stokes equations (NSEs) numerically for an incompressible fluid flow in a complex structured blood vessel model, which has been demonstrated in several different contexts, e.g., in \cite{ Melis2017, blanco2017blood, blanco2014anatomically, zhu2015role}. Finding a numerical solution for incompressible NSEs includes, among other things: (1) the ability to incorporate incompressibility conditions into the system; (2) calculating pressure equivalently from flow velocity by assuming pressure boundary conditions (BCs) for the system; and (3) guaranteeing the numerical stability of the system \cite{prudhomme2002numerical}. In addition, since a full set of arteries cannot be perfectly reconstructed from the MRI data, only approximate solutions can be obtained. Hence, a simplified model is well-motivated. In particular, the model of \cite{moura2021anatomical} has been built upon a one-dimensional (1D) approximation of NSEs in the main arteries \cite{Melis2017}. In this study, we propose using the pressure--Poisson equation (PPE) \cite{pacheco2021continuous} in conjunction with Fick's law of diffusion for the microcirculation \cite{berg2020modelling,arciero2017mathematical} to estimate blood circulation in an arbitrary set of arteries distinguishable in MRI data.
\\
In incompressible flows, the pressure is in equilibrium with a time-varying divergence-free velocity field, and its gradient is a significant physical quantity since it represents a force per unit of volume. We derive PPE for a three-dimensional (3D) steady state blood flow and domain. We do not directly discretize the NSEs \cite{disNSE2013}; instead, we (i) model the blood flow in the arteries as the simplest possible system that can be derived from Stokes equation (SE), (ii) derive BCs assuming that the pressure gradient can be obtained from a Hagen--Poiseuille  flow in arterioles, which is in accordance with \cite{reichold2009vascular}, and then (iii) discretize and solve the resulting PPE system.
\\
 We utilize PPE to estimate pressure fields within the arterial domain by discretizing this system using a numerical method such as FE analysis while considering Neumann BCs. This process involves breaking down the arterial system into discrete elements, which enables us to solve the equation numerically. Subsequently, we utilize Fick's law to estimate the excess volumetric blood concentration within the microcirculation domain \cite{berg2020modelling, arciero2017mathematical}. We  discretize Fick's law using the FE method, to numerically solve it within the domain. Our hypothesis posits that there is a gradual reduction in this concentration with increasing distance from the arteries. This presumption is consistent with physiological principles, as it aligns with the process of diffusion, wherein blood  diffuses from areas characterized by higher concentrations to those with lower concentrations.  
\\
The parameters determining the BCs are based on the knowledge of average blood viscosity, cerebral blood flow rate, pressure, and diameter of the arterioles constituting an interface between arteries and capillaries \cite{caro2012mechanics}. The diffusion and decay parameter of the microcirculation model are, additionally, affected by experimental data of the microvessel density in different brain tissues \cite{kubikova2018numerical}. 
\\
The excess volumetric blood concentration plays a central role in determining the electrical conductivity $\sigma$ within the tissue. We incorporate this critical information into the electrical conductivity atlas, which is a key component of the proposed model. This atlas quantifies how the  electrical conductivity of the tissue varies in relation to the volumetric blood concentration. The relationship between microcirculation estimates and electrical conductivity is derived from Archie's law, a well-known and widely used two-phase mixture model. Archie's law allows us to determine the effective electrical conductivity of a mixture comprising fluid and porous media. Noteworthy studies \cite{j2001estimation,peters2005electrical,glover2000modified,cai2017electrical} have contributed to our understanding  of this relationship via electrical measurements.
As outlined in this study, the impact of the volumetric blood concentration on electrical conductivity remains significant within a specific distance, typically ranging from 10 to 20 mm, from the arteries. 
\\
To place our work in a broader scientific context, we explore the proposed model's connection with recent advancements in modelling dynamic blood flow along the capillary bed and their effect on the  electrical conductivity distribution of the brain tissue.
\\
Our results obtained with a finite element (FE) discretization of a multi-compartment head segmentation suggest that, given a high-resolution 7T MRI dataset, PPE, together with Fick's and Archie's laws, allows us to approximate blood pressure effects on the electrical conductivity  in the brain. We compare the results to a tissue-wise constant  distribution \cite{dannhauer2010} which provides the background model for Archie's law. Furthermore, we investigate potential future directions and applications of the proposed model, ultimately concluding this paper with an extensive discussion that emphasizes its importance and potential influence on future research. Our numerical implementation is openly available \cite{sampsa_pursiainen_2023_8200136}. 
\\


\section{Methods}
\label{sec: Theory} 
In this study, a domain for a human brain vasculature model is defined as the union  $\Omega \cup \hat{\Omega}$ of a microcirculation domain $\hat{\Omega}$ and a domain $\Omega$ composed of distinguishable arteries. 
In $\Omega$, the total pressure $p$ in the arteries is assumed to be of the form 
\begin{equation} 
   p = p^{\mathcal{(D)}}   + p^{\mathcal{(H)}}   \,, 
\label{total_pressure} 
\end{equation}
where $p^{\mathcal{(D)}}$ is a time-average of a dynamic arterial pressure and $p^{\mathcal{(H)}}$ is a hydrostatic venous pressure distribution following from a (constant) gravitational force field ${\bf f}$, blood density $\rho$ and position ${\bf x}$. Given a Riemannian metric $\bf g$ in $\Omega$, $p^{\mathcal{(H)}}({\bf x})$  can be expressed as 
\begin{equation} 
   p^{\mathcal{(H)}}({\bf x})=  \int_{\mathcal{C}({\bf x}, {\bf x}_0)} {\bf g}({\rho \,  \bf f}, \hbox{d} {\bf { r}})\,,
\label{hydrostatic_pressure}   
\end{equation}
where $\mathcal{C}({\bf x}, {\bf x}_0)$ is a geodesic, i.e., the shortest path on the surface, from a reference point ${\bf x}_0$ to ${\bf x}$ and $\hbox{d} {\bf r}$ its differential. The Riemannian metric utilized in formulations preserves local differences in shape and size across various head regions while maintaining the domain's shape across coordinate transformations. We include the Riemannian metric in our mathematical model for generality but do not explicitly discuss the curvature of the domain in this study. We solve PPE to obtain an approximation for $p$, after which volumetric blood concentration in $\hat{\Omega}$ is approximated using Fick's law of diffusion. Finally, electrical conductivity atlases are obtained based on the concentration via Archie's law. This section briefly reviews the theoretical grounds of PPE, Fick's law of diffusion, and Archie's law of two-phase electrical conductivity mixtures.


\subsection{Circulation in arteries}
In this study, blood is modelled as a non-ho\-mo\-ge\-ne\-ous, incompressible viscous fluid moving through blood vessels  
as a Newtonian flow with constant absolute dynamic viscosity of $\mu =  0.004$ Pa s, which can be considered a typical value in vessels with diameter from one to few millimeters with hematocrit between 45 and 60 \% \cite{pries1996biophysical,pries1992blood}. The corresponding 3D time-dependent Stokes equation is:
\begin{subequations}
\begin{align}
&\rho {\bf u}_{,t}-\mu{\bf L}{\bf u}+\nabla p=\rho \,{\bf {\bf f}}&\mathsf{in}\,\,\Omega \times [0, T]\,,
\label{eqn:line-1.1}
 \\
& \mathsf{div}({\bf u})=0&\mathsf{in}\,\,\Omega \times [0, T]\,,
\label{eqn:line-1.2}
  \\
&{\bf u}({\bf x};0)={\bf u}_0&\mathsf{on}\,\Omega \,,
\end{align}
\label{main-NSE}
\end{subequations}
where $\Omega $ is the physical domain of the problem and $[0, T]$ is the time domain. The blood velocity and pressure are defined as ${\bf u}={\bf u}({\bf x};t)$ and $p = p({\bf x}; t)$, respectively, in which ${\bf x}\in \Omega $ and $t\in\mathbb{R}^+$. The specification of \textit{the diffusion force}, denoted as ${\bf L}{\bf u}$, can be found in \ref{app:Diffusion model}. We identify $\rho({\bf x}; t)=\rho$ as a constant blood (mass) density, and the term ${\bf f}={\bf f}({\bf x}; t)$ on the right-hand side accounts for the possible action of \textit{external forces}. It is assumed that the initial velocity field ${\bf u}_0$ is divergence-free. The vector $\nabla p$ is a function of given velocity data and depends on the constitutive properties of blood,
$
\nabla p=\rho \,{\bf {\bf f}}-\rho {\bf u}_{,t}+\mu{\bf L}{\bf u}\,,
$
whose divergence leads to PPE. Derived from the momentum equation applying the incompressibility \eqref{eqn:line-1.2},  PPE of laminar flow is of the form
\begin{equation}
\begin{aligned}
    & \Delta p=\nabla\cdot (\rho \,{\bf {\bf f}})+2\mu \,\nabla\cdot ({\bf Ri}({\bf u}))&\mathsf{in}\,\,\Omega \,.
\end{aligned}
\label{pp}
\end{equation}
Because any harmonic function with a vanishing mean can be added to the above equation, it is clear that this does not define a unique $p$. As a result, we must consider the specific BC for the system \eqref{pp}. 
Now, we face two critical questions:  1) Can equation  \eqref{pp}  be used to calculate pressure $p$, and 2) does it imply incompressibility  \eqref{eqn:line-1.2}?  It seems that the answer to both questions is yes, if the divergence of Ricci curvature term $ \,\nabla\cdot ({\bf Ri}({\bf u}))$ is zero or small enough, i.e., if the geometry is locally flat.  Incompressibility for a static pressure field is implied by (\ref{pp}) under $\nabla\cdot ({\bf Ri}({\bf u})) = 0$, as justified in  \ref{app:incompressibile_steady_state}. 


\subsection{Pressure--Poisson equation}
\label{sec: Pressure-Poisson equatio}

We solve the pressure in system \eqref{main-NSE}  applying PPE and determining the proper BC based on that. Previously, FEs have been used to solve the pressure fields in the arteries driven by the so-called Neumann BCs, which are often sensitive and challenging to determine~\cite{2001-Ebbers, 2009-Ebbers}. It is theoretically sufficient to provide a BC for the system \eqref{main-NSE} by projecting \eqref{eqn:line-1.1} onto the boundary in either a normal or tangential direction. Thus, when Lemma \ref{div-lu} and the incompressibility requirement \eqref{eqn:line-1.2} are applied to the formula \eqref{eqn:line-1.1} together with the assumption that ${\bf f}$ is \textit{a constant gravity field} with $\nabla \cdot (\rho {\bf f}) = 0$. Utilizing formulas \eqref{total_pressure} and \eqref{hydrostatic_pressure}, the 3D-PPE model representing the dynamical aspect of the pressure field $p^{\mathcal{(D)}}$ takes the following form:
\begin{subequations}
\begin{align}
 &\Delta p^{\mathcal{(D)}} = 0 &\mathsf{in}\,\,\Omega\,,
\label{eqn:line-2.2}
\\
&{\bf g}(\nabla p^{\mathcal{(D)}}, {\vec{{\bf n}}})=-\zeta \lambda  ( p^{\mathcal{(D)}} -    {p}^{\mathcal{(B)}}) &\mathsf{on}
\,\,\partial \Omega \cap \partial \hat{\Omega}\,,
\label{eqn:line-2.4}
\end{align}
\label{main-PPE}
\end{subequations}
where parameter $\zeta$ is assumed to be constant, affecting the level of total blood flowing into the arterioles, and ${p}^{\mathcal{(B)}}$ is a distribution enforced on the boundary, determining the contribution of the incoming flow.  
Note that the domain $\Omega$ is flat with zero or close-to-zero curvature, the boundary condition is set on the common boundary $ \partial \Omega \cap \partial \hat{\Omega}$ of $ \Omega$ and $\hat{\Omega}$, and ${\vec{{\bf n}}}$ is the normal unit vector that is defined on the artery wall. The pressure drop on the boundary is denoted by $-{\bf g}(\nabla p, {\vec{{\bf n}}})=-\partial p/\partial\vec{{\bf n}}$ which is the inward normal derivative of the blood pressure and characterizes the behavior of the fluid near the boundary. 
The BC follows from the assumption that the flow is laminar in arterioles, which leads the blood through the boundary wall to the microcirculation domain. The total level of this flow is scaled by the pressure $p$ and $\lambda = \xi/\overline{\xi}$, which is defined as the ratio between the length density $\xi$ of microvessels \cite{kubikova2018numerical} per unit volume ($m^{-2}$), i.e., the total number of cross-sections per unit area, and the integral mean 
\begin{align*} 
\overline{\xi} = \frac{1}{|\partial \Omega|} \int_{\partial \Omega} \xi \,\hbox{d} \omega_{\partial \Omega}\,,
\end{align*}   
where $|\partial \Omega|=\int_{\partial \Omega}\,\mathrm{d}\omega_{\partial \Omega}$. We discuss the Hagen--Poiseuille model \cite{caro2012mechanics} motivating the BC in Section \ref{app:Boundary conditions} as a means of determining the BC. Assuming, for simplicity, a steady state flow,  the blood velocity can be approximated based on the solution of PPE, implying the following formula:
\begin{equation}
\mu \, \Delta_B{\bf u} = \nabla {p}^{\mathcal{(D)}} -\rho \, {\bf f}\,,
\label{u_artery}
\end{equation}
with ${\bf u} = 0$ on $\partial \Omega$  (\ref{app:incompressibile_steady_state}).  


\subsubsection{Variational form}

The equation determining blood flow pressure in the cerebral arteries is approximated numerically through FE discretization. We need to identify an appropriate variational formulation of PPE in order to continue with the FE discretization. Both the mathematical analysis and the numerical solution  are based on weak formulations. Integration by parts results in a weak general form of the above equations for blood pressure (\ref{main-PPE}) and velocity field (\ref{u_artery}) . 
\\
We assume that $p\in{\mathbb{W}}$ with 
${\mathbb{W}}=\mathrm{H}^1(\Omega)$ and ${\bf u}\in{\mathbb{V}}_0$ and ${\bf f}\in{\mathbb{V}}$, where ${\mathbb{V}}_0$ and ${\mathbb{V}}$ denote spaces of vector-valued functions in a physical 3D space, with $\mathbb{V}=[\mathrm{H}^1(\Omega)]^3$  and ${\mathbb{V}}_0 = [{ \mathrm{H}}^1_0(\Omega)]^3 \subset \mathbb{V}$. In other words, each of the three Cartesian components in ${\mathbb{V}}_0$ and ${\mathbb{V}}$ is in the \textit{Sobolev space} $\mathrm{H}^1(\Omega )$  of square-integrable ($\int_{\Omega} |{\bf u}|^2 \, \mathrm{d}\omega_{\Omega} < \infty$) functions with square integrable partial derivatives
\begin{align*}
\mathrm{H}^1(\Omega)&=\left\{u\in \mathrm{L}^2(\Omega)\,|\, \nabla u\in \mathrm{L}^2(\Omega )\right\}\quad\text{and}
\\
\mathrm{H}^1_0(\Omega)&=\left\{u\in\mathrm{H}^1(\Omega)\,|\, u|_{\partial \Omega}=0\right\}\,.
 \end{align*}
A variational form of \eqref{main-PPE} can be obtained by multiplying the equation \eqref{eqn:line-2.2} with a smooth enough test function $q\in {\mathbb{W}}$ and applying the divergence theorem. We arrive at the following variational problems:
\begin{itemize}
    \item[I.] Find ${p}^{\mathcal{(D)}}\in {\mathbb{W}}$ such that, for a smooth enough 
 test function $ q\in  {\mathbb{W}}$
\begin{equation}
 \begin{aligned}
 b( {p}^{\mathcal{(D)}}, q)=-\int_{\partial \Omega}q\,\zeta \lambda  ( {p}^{\mathcal{(D)}} -  {p}^{\mathcal{(B)}})\,\mathrm{d}\omega_{\partial \Omega}  \,.
\end{aligned}
 \label{PPE_var}
 \end{equation}
\end{itemize}
The continuous bilinear form $b:{\mathbb{W}}\times {\mathbb{W}}\rightarrow \mathbb{R}$ is defined as follows:
 \begin{align*}
b({p}^{\mathcal{(D)}}, q):= \int_{\Omega } {\bf g}(\nabla {p}^{\mathcal{(D)}}, \nabla q)\,\mathrm{d}\omega_{\Omega}\,.
\end{align*}
\begin{itemize}
     \item[II.] Find ${\bf u} \in {\mathbb{V}}_0$  such that, for a smooth enough 
 test function $ {\bf v} \in {\mathbb{V}}_0$
 \begin{equation}
     a({\bf u}, {\bf v}) = \int_\Omega {\bf g}(\nabla {p}^{\mathcal{(D)}},  {\bf v}) \, \hbox{d} \omega_\Omega  - \int_\Omega \rho \, {\bf g}({\bf f},  {\bf v}) \, \hbox{d} \omega_\Omega \,.
 \label{SE_var}
 \end{equation}
 \end{itemize}
The continuous bilinear form $a:{\mathbb{V}}_0 \times {\mathbb{V}}_0 \rightarrow \mathbb{R}$ is defined as follows:
\[
   a({\bf u}, {\bf v}) := \mu\int_{\Omega}{\bf g}(\nabla {\bf u}, \nabla {\bf v}) \,\mathrm{d}\omega_{\Omega}\,.
 \]


\subsection{Pressure boundary condition} 

The blood flows from $\Omega$ to the microcirculation domain $\hat{\Omega}$ through the 
 total cross-section area of the outlets of the arterioles on $\partial \Omega$. Arterioles are microvessels that  connect to arteries on one end and are continued by capillaries and thereon by venules in $\hat{\Omega}$. Most of the pressure decay in the blood flow, about 70 \% of the total pressure \cite{caro2012mechanics}, takes place in arterioles, which form a necessary transition zone for the blood pressure. In this study, we focus on modelling the blood flow within vessels and microvessels while excluding the circulation of interstitial fluid, which occurs outside the vessels.
\label{app:Boundary conditions}

\begin{figure}[h!]
\centering
\begin{minipage}{8.0 cm}
    \centering\includegraphics[width = 7.9 cm]{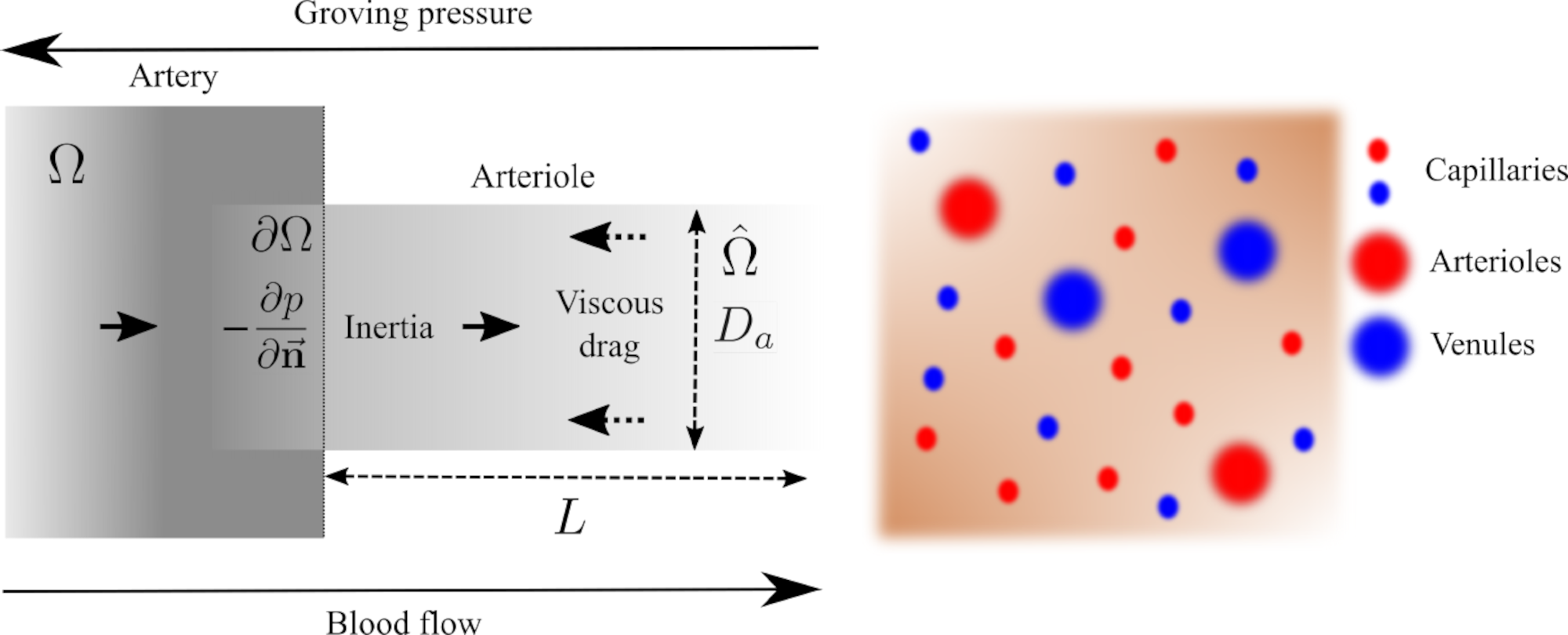}
\end{minipage}

\caption{\label{artery_arteriole_interface} Our model of the artery-arteriole interface shows the arteries in the domain $\Omega$ and the microcirculation domain $\hat{\Omega}$, where blood flows from the arteries into a network of arterioles, capillaries, and venules. {\bf Left:} The relationship between the flow rate and the normal derivative  ${\bf g}(\nabla p, {\vec{{\bf n}}})$ on $\partial \Omega$ of the pressure (with respect to the Riemannian metric) is determined by the Hagen--Poiseuille equation \cite{caro2012mechanics}, i.e., a laminar balance between the inertial force and viscous drag in a cylindrical tube with a diameter $D_a$ and a length $L$. {\bf Right:} A schematic of brain tissue cross-section with a set of blood vessels, including cross-sections of arterioles, capillaries, and venules. The length density of arterioles can be approximated by the total observed length density of microvessels  \cite{kubikova2018numerical}, the cross-section areas of individual microvessels, and the total cross-section area fractions between the different microvessel types \cite{tu2015human}. It is assumed that, due to the randomness of the vessel orientations, there is no orientational dependence in the length densities. }
\end{figure}


\subsubsection{Hagen--Poiseuille model} 
 \label{app:Hagen--Poiseuille model}
 
 
To obtain a value for $\zeta$, we use the Hagen--Poiseuille equation of laminar flow \cite{caro2012mechanics}, which, written for a single artery-arteriole interface (Figure \ref{artery_arteriole_interface}), is of the form \cite{caro2012mechanics}: 
\begin{equation}
\begin{aligned}
\vartheta p= \frac{8 \pi \mu L Q_a}{A_a^2}\,.
\end{aligned}
\label{HPE}
\end{equation}
Here, it is taken into account that $p$ stands for 
 a pressure drop between the inlet and outlet of an artery. As a result, it is scaled by the constant $\vartheta$, which determines the relative pressure drop in arterioles. In the equation \eqref{HPE}, $Q_a$ represents the blood flow rate through a single arteriole, $A_a = \pi/4\, D_a^2$ is its cross-sectional area with a diameter of an arteriole $D_a$, and $L$ represents its length. The value of $Q_a$ can be approximated as 
\begin{align*} 
    Q_a = \frac{Q}{ |\partial \Omega|  \, \overline{\xi}_a} \,,
\end{align*}
where $Q$ is the total flow through $\partial \Omega$ and  $\overline{\xi}_a$ is the average length  density of arterioles on $\partial \Omega$  (\ref{app:arteriole_density}). It follows that
\begin{equation}
L = \frac{|\partial \Omega| \, A_a^2 \,  \vartheta \overline{p} \,\overline{\xi}_a}{8\pi  \mu Q}\,, 
\label{arteriole_length}
\end{equation}
where $\overline{p}$ is a given average pressure. 
Since the inward normal derivative of $p$ represents force per surface area, we additionally scale the approximation following from Hagen--Poisseuille model (\ref{HPE}) locally by the relative area covered by the arterioles, i.e., the  product $ \xi_a  A_a $ between the length density $\xi_a$ and per vessel cross-sectional area $A_a$ of arterioles (Figure \ref{artery_arteriole_interface}), resulting in 
\begin{align*}
\zeta = \frac{8 \, \pi \, \mu Q}{  |\partial \Omega| \, A_a \,  \overline{p}  }\,. 
\end{align*}
Here it has been taken into account that $\lambda = \xi /\overline{\xi} = \xi_a /\overline{\xi}_a$ (\ref{app:arteriole_density}).


\subsection{Microcirculation model}

When blood flows from the arteries in $\Omega$ to the microcirculation domain $\hat{\Omega}$, it produces an excess blood volume concentration ${\bf c}={\bf c}({\bf x}; t)$ in comparison to the equilibrium state where the pressure in the head is constant. 
This can be counted as local blood supply upregulation, which varies the cross-sectional diameter  \cite{epp2020predicting}. To approximate ${\bf c}$, we apply advection-diffusion equation from Fick's second law of diffusion \cite{berg2020modelling, arciero2017mathematical, reichold2009vascular} and mass conservation. Fick's second law of diffusion and mass conservation have the following form: 
\begin{equation}
\begin{aligned}
& {\bf c}_{,t}=\nabla \cdot\big(-({\bf u}\otimes {\bf c})+{\bf J}\big)+\hat{{\bf c}}&\mathsf{in}\,\,\hat\Omega \! \times \! [0, T]\,,
\\
& \mathsf{div}({\bf u})=0&\mathsf{in}\,\,\hat\Omega \! \times \! [0, T]\,.
\end{aligned}
\label{DE.vessel}
\end{equation}
where ${\bf u}\otimes {\bf c}$ and ${\bf J}$ stand for the advective and diffusion flux, respectively. The flux ${\bf J}={\bf J}({\bf x}; t)$ is a vector pointing in the direction of movement, and the three-dimensional flux amplitude distribution $|{\bf J}|$ is proportional to the amount of blood flowing in the direction of ${\bf J}/|{\bf J}|$ per unit time. 
The following equation  is in accordance with Fick's first law under the assumption, that volumetric blood concentration and diffusive flow are proportionate to the tissue's relative microvessel density:
\begin{equation}
\label{fick}
  {\bf J}=-\varsigma \lambda \nabla{\bf c} \,.
\end{equation}
Here $\varsigma$ is an effective diffusion coefficient and is referred to as diffusivity.  In system \eqref{DE.vessel}, the term $\hat{{\bf c}}$ stands for a decay term. This results in the following governing differential system:
\begin{subequations}
\begin{align}
&{\bf c}_{,t}+{\bf u}\cdot\nabla {\bf c}-\varsigma\lambda \Delta_B {\bf c}=- \boldsymbol{ \varepsilon} {\bf c}&\mathsf{in}\,\,\hat\Omega \! \times \! [0, T]\,,
\label{DE.vessel_1.1}
\\
& \mathsf{div}({\bf u})=0&\mathsf{in}\,\,\hat\Omega \! \times \! [0, T]\,,
\label{DE.vessel_1.2}
\\
&  {\bf g}( \nabla c, {\vec{ \bf{n}}})= -\frac{1}{\varsigma \lambda}\,{\bf g}( {\bf J}, {\vec{ \bf{n}}}) &\mathsf{on}\,\, \partial \Omega \cap \partial \hat{\Omega},
\label{DE.vessel_1.3}
\end{align}
\label{DE.vessel_1}
\end{subequations}
where the Fickian and non-Fickian fluxes are represented by the terms ${\bf u}\cdot\nabla {\bf c}$ and $\varsigma\lambda\Delta_B {\bf c}$, respectively. The concentration is maximized on the boundary $ \partial \Omega \cap \partial \hat{\Omega}$ due to the inward flux over $ \partial \Omega \cap \partial \hat{\Omega}$  from $\Omega$ to $\hat{\Omega}$. The decay term $\boldsymbol{ \varepsilon} {\bf c}$ accounts for the flow from the microcirculation domain to the venous circulation system, which happens at a rate proportional to the coefficient $ \boldsymbol{\varepsilon}$. 
We consider a steady state solution, i.e., $ \lim_{t \to \infty} {\bf c}({\bf x}; t)=c({\bf x})$, in which concentration reaches a constant state or remains stable over time, $c({\bf x})=c$, and the flow's macrovelocity ${\bf u}$ vanishes.
The following simplified form results from applying the conservation of mass condition \eqref{DE.vessel_1.2} to the limiting equation \eqref{DE.vessel_1.1}
\begin{equation}
\begin{aligned}
 -\varsigma \lambda \Delta c+  {\bf \varepsilon} \, c=s\quad \mathsf{in}\,\,\hat\Omega\,,
\end{aligned}
\label{DE.vessel_2}
\end{equation}
where $s = \varsigma \lambda\, {\bf g}( \nabla c, {\vec{ \bf{n}}})\,$.


\subsubsection{Parameter estimation} 

To obtain an approximation for $\varsigma$ without taking into account the geometry, we rely on the Hagen--Poiseuille model for the arterioles (section \ref{app:Hagen--Poiseuille model}) with the additional assumptions that (i) the excess concentration is zero when the pressure is zero and that (ii) the concentration gradient is constant along the length of the microvessels. Under these assumptions, Fick's first law (\ref{fick}) for the flux 
passing $ \partial \Omega \cap \partial \hat{\Omega}$ is of the form 
\[
|{\bf J}| = \varsigma  \lambda \frac{\vartheta c}{ L} = \varsigma  \lambda \frac{\vartheta A_a \overline{\xi}_a}{ L}\,,
\]
where $A_a \overline{\xi}_a$ is the mean volume concentration of the blood and $L$ is the distance over which the concentration decreases from $\vartheta A_a \overline{\xi}_a$. Sustituting $|{\bf J}| =  \lambda Q_a \overline{\xi}_a$ and  the formula (\ref{arteriole_length}) for $L$, it follows that 
\[
\varsigma =  \frac{A_a \overline{p} }{8 \pi  \mu}\,.
\]
Parameter $\varepsilon$ can be obtained assuming that at each interior point in $\hat{\Omega}$ the blood in venules flows to a venous vessel, thereby exiting $\hat{\Omega}$. In a balanced state, the concentration loss caused by this flow equals the  density $| {\bf J} |$. We establish a balance by requiring that the sink amplitude ${\varepsilon}$ integrated over a radius $R$ sphere with a volume $\mathcal{V}_{\hbox{\scriptsize max}} = (4/3) \pi R^3$  of the largest element in the FE discretization matches an outward flux $|{\bf J}|$ integrated over the surface of the sphere, i.e., 
\[
\varepsilon \mathcal{V}_{\hbox{\scriptsize max}} = (4/3) \pi R^3 \varepsilon =  4 \pi R^2 | {\bf J} | \quad \hbox{or} \quad 
\varepsilon =   \varsigma \lambda \frac{ \vartheta }{ L  } \left( \frac{45 \pi}{\mathcal{V}_{\hbox{\scriptsize max}}} \right)^{1/3}\,.
\]

\subsubsection{Variational form}

The variational form of \eqref{DE.vessel_2} can be obtained in the same fashion as in the case of PPE. Multiplying \eqref{DE.vessel_2}
 with a smooth enough test function $h \in {\mathbb{W}}$ and applying the divergence theorem, we arrive at the following form: 
\begin{itemize}
  \item[] Find $c\in {\mathbb{W}}$ such that, for a smooth enough  test function $ q\in  {\mathbb{W}}$
\begin{equation}
 \begin{aligned}
   d( c, h) =  \int_{\partial \Omega}\varsigma \lambda \,h\, {\bf g}( \nabla c, {\vec{ \bf{n}}})\, \mathrm{d }\omega_{\partial \Omega}\,,
 \end{aligned}
\label{fick_variational}
\end{equation}
\end{itemize}
where the linear boundary term describing the incoming flow is on the left-hand side and  $\vec{\bf n}$  is the normal unit vector defined in the microcirculation domain. The continuous bilinear form $ d( c, h)$ is  defined as $d:{\hat{\mathbb{V}}}\times \hat{\mathbb{V}}\rightarrow \mathbb{R}$, where
 \begin{align*}
d(c, h)&= \int_{\hat{\Omega}}  \varsigma \lambda\,{\bf g}(\nabla c, \nabla h)\,\mathrm{d}\omega_{\hat{\Omega}} +  \int_{\hat{\Omega}}\varepsilon c\, h\,\mathrm{d}\omega_{\hat{\Omega}}\,.
\end{align*}


\subsection{Discretization}
\label{app:forward_model}

 We use the Ritz-Galerkin method \cite{braess2007finite} to discretize the PPE \eqref{main-PPE} and Fick's law of diffusion (\ref{DE.vessel_2}), whose solutions are assumed to be contained by the trial function spaces 
\begin{align*}
{\mathbb{V}_h}&=\mathsf{span}\big\{\psi^1,\dots,\psi^{n}\big\}
   \subset {\mathbb{V}}
   \\
 {\mathbb{W}_h}&=\mathsf{span}\big\{\varphi^1,\dots,\varphi^{m}\big\}
   \subset {\mathbb{W}}
   \\ {\hat{\mathbb{V}}_h}&=\mathsf{span}\big\{\phi^1,\dots,\phi^{m}\big\}
   \subset {\hat{\mathbb{V}}}\,, 
  \end{align*}
respectively. In each case, the discretization error is assumed to be orthogonal to the solution. 
Linear Lagrangian (nodal) basis functions are utilized in this context. Specifically, we have the sets $\{\psi^i\}_{i=1}^{n}$ and $\{\varphi^h\}_{h=1}^{m}$ supported in $\Omega$, as well as $\{\phi^h\}_{h=1}^{m}$ supported in $\hat{\Omega}$. These sets consist of piecewise linear functions that satisfy the conditions $\psi^i(x_j)=\delta^i_j$ for $i, j=1, \cdots, n$, $\varphi^h(x_k)=\delta^h_k$ for $h, k=1, \cdots, m$ at the FE mesh nodes of the finite element (FE) mesh in $\Omega$, and $\phi^h(x_k)=\delta^h_k$ for $h, k=1, \cdots, m$ at the nodes of $\hat{\Omega}$. Consequently, the velocity ${\bf u}\in {\mathbb{V}}$, pressure $p^{\mathcal{(D)}} \in \mathbb{W}$, and concentration $c \in \hat{\mathbb{V}}$ take  the following forms: 
\begin{align*}
 &u^{\ell}({\bf x})=\sum_{i=1}^{n} \psi^i({\bf x}) u^{\ell}_i\,, \quad f^{\ell}({\bf x})=\sum_{i=1}^{n} \psi^i({\bf x}) f^{\ell}_i\,,
 \\
 & {p}^{\mathcal{(D)}}({\bf x}) =  \sum_{i=1}^{m} \varphi^{i}({\bf x}) p_{i} \,,\quad c({\bf x}) \! = \! \sum_{i=1}^{m} \phi^{i}({\bf x}) c_{i}\,,
\end{align*}
\\
\sloppy 
for $\ell=1, 2, 3$. The coordinate vectors are denoted by {\setlength\arraycolsep{2pt} 
\begin{eqnarray*} 
{\bf u} & = & (u^1, u^2, u^3)=(\{u^1_i\}_{i=1}^n, \{u^2_i\}_{i=1}^n, \{u^3_i\}_{i=1}^n) 
\\ 
{\bf f} & = & (f^1, f^2, f^3)=(\{f^1_i\}_{i=1}^n,\{f^2_i\}_{i=1}^n, \{f^3_i\}_{i=1}^n) 
\\ 
 {\bf p }^{\mathcal{(D)}} & = & ({p}^{\mathcal{(D)}}, {p}^{\mathcal{(D)}}, {p}^{\mathcal{(D)}})=(\{p_i\}_{i=1}^m,\{p_i\}_{i=1}^m, \{p_i\}_{i=1}^m)
\\
{\bf c } & = &  (c_1, c_2, \ldots, c_m)\,.
\end{eqnarray*}

\subsubsection{Blood pressure and velocity in arteries}

The system of \eqref{PPE_var} is equivalent to the following Ritz-Galerkin discretized form: 
\begin{itemize}
     \item[I.] Find $p^{(\mathcal{D})}_h\in  {\mathbb{W}_h}$, such that, for all $ \varphi_h\in \mathbb{W}_h$
\begin{equation}
\begin{aligned}
b(p^{(\mathcal{D})}_h, \varphi_h)= -\int_{\partial \Omega}\varphi_h \,\zeta \lambda  ( p^{(\mathcal{D})}_h -  {p}^{\mathcal{(B)}})\,\mathrm{d}\omega_{\partial \Omega} 
\end{aligned}
\label{PPE_var_bar}
\end{equation}
\item[II.] Find ${\bf u}_h \in {\mathbb{V}}_{0,h}$  such that, for all $ \psi_h\in {\mathbb{V}}_{0,h}$
 \begin{equation}
     a({\bf u}_h, \psi_h) = \int_\Omega {\bf g}(\nabla {p}^{\mathcal{(D)}},  \psi_h) \, \hbox{d} \omega_\Omega  - \int_\Omega \rho \,{\bf g}( {\bf f},  \psi_h) \, \hbox{d} \omega_\Omega \,.
 \label{SE_var_discretized}
 \end{equation}
\end{itemize}
Here ${\mathbb{V}}_{0,h} \subset \mathbb{V}_h$ is obtained from  $\mathbb{V}_h$ by excluding the boundary degrees of freedom, i.e., basis functions with non-zero values on the boundary $\partial \Omega$. 
 \\
Equation (\ref{PPE_var_bar}) possesses a solution that satisfies the following equation:
\begin{equation}
({\bf K} + {\bf M}) \,  {\bf p}^{\mathcal{(D)}} = {\bf M} \, {\bf p}^{\mathcal{(B)}} \quad \hbox{with} \quad {\bf p}^{\mathcal{(B)}} = \hat{\bf p} + \overline{\bf p}. 
\label{d_ppe}
\end{equation} 
Here $\hat{\bf p}$ denotes a contribution of the incoming flow, normalized to a given pulse pressure $\hat{ p}$ and  $\overline{\bf p}$  is a normotensive  diastolic average, $\overline{\bf p}_i = \overline{p}$ for each  entry $i$. The matrices corresponding to equation \eqref{d_ppe} can be expressed as follows:
{\setlength\arraycolsep{2 pt}
\begin{align*}
&{\bf K}_{ij}   =   \int_{\Omega} \!  \varphi_{h, i }\,\varphi_{h, j}  \,  \hbox{d} \omega_{\Omega }
\\ 
&{\bf M}_{ij}  =  \int_{\partial \Omega} \! \zeta \lambda  \, \varphi_i \varphi_j  \, \hbox{d} \omega_{\partial \Omega},
\end{align*}}
where ${\bf K}=\left[{\bf K}_{ij}\right]_{m\times m}$ and ${\bf M}=\left[{\bf M}_{ij}\right]_{m\times m}$.

Following from the present modelling premises, we find an estimate for the systolic pressure distribution as a steady state solution, whose boundary restriction approximately satisfies ${\bf p}^{\mathcal{(D)}}_{|_{\partial \Omega}} = {\bf p}^{(\mathcal{B})}$, i.e., the boundary restriction of ${\bf p}^{\mathcal{(D)}}_{|_{\partial \Omega}}$ equals that of ${\bf p}^{(\mathcal{B})}$.   
Given an initial approximation ${\hat{\bf p}}^{(0)}$ of $\hat{\bf p}$, a recursive sequence of ${{\bf p}^{\mathcal{(D)}}}^{(1)}, {{\bf p}^{\mathcal{(D)}}}^{(2)}, \ldots$ is obtained by finding ${{\bf p}^{\mathcal{(D)}}}^{(k)}$ as a solution of (\ref{d_ppe}) corresponding to $\hat{\bf p}^{(k)}$, and setting $\hat{\bf p}^{(k+1)} = {{\bf p}^{\mathcal{(D)}}}_{|_{\partial \Omega}}$, for $k = 0, 1, 2, \ldots$. As a stopping criterion, we use the tolerance condition \[
\frac{\| {{\bf p}^{\mathcal{(D)}}}^{(K)} - {{\bf p}^{\mathcal{(D)}}}^{(K-1)}\|_2}{ \| {{\bf p}^{\mathcal{(D)}}}^{(K-1)} \|_2 } < \epsilon
\]
with $\epsilon = 0.01$ and fix ${{\bf p}^{\mathcal{(D)}}}^{(K)}$ as the final estimate of ${{\bf p}^{\mathcal{(D)}}}$.  The initial distribution ${\hat{\bf p}}^{(0)}$ is selected to be piecewise constant with  ${\hat{\bf p}}^{(0)}_i = \hat{p}$, if $i$ corresponds to one of two inlets (Figure \ref{fig:segmentation}) placed in the vicinity of the anterior and posterior end-points of Circle of Willis at the base of the brain, where the blood flow enters the brain, one in basilar artery and the other one in the junction of anterior cerebral and anterior communicating arteries. Other entries of  ${\hat{\bf p}}^{(0)}$ are set to zero. Two sources were applied to ensure balanced results; namely, asymmetry  of the flow in the Circle of Willis has been shown to extend to global scale \cite{zhu2015role}.
\\ 
The solution of equation \eqref{SE_var_discretized} satisfies  ${\bf  u}^{\ell} = 1/\mu\,{\bf L}^{-1} {\bf q}^{\ell}$, where the components of matrices ${\bf L}$ and ${\bf q}^{\ell}$ are obtained as follows:
{\setlength\arraycolsep{2 pt}
\begin{align*}
&{\bf L}_{ij}   =   \int_{\Omega} \mu \,\psi_{h, i}\,{\psi}_{h, j}\,\mathrm{d}\omega_{\Omega}
\\ 
&{\bf q}^{\ell}_{ij} =  \int_\Omega  p^{\ell}_{h}\,\varphi_{h, i} \,{\psi}_{j}  \, \hbox{d} \omega_\Omega   - \int_\Omega \rho\, f^\ell_i\,{\psi}_j\,\mathrm{d} \omega_\Omega\,,
\end{align*}}
where $\ell=1, 2, 3$. Matrix ${\bf L}$ can be obtained from ${\bf K}$ by excluding the boundary degrees of freedom from row and column indices. Namely,  the test function space $\mathbb{V}_h$ is linear and nodal akin to $\mathbb{W}_h$, but in  $\mathbb{V}_h$ the boundary degrees of freedom are set to zero due to the zero boundary condition for the velocity.}

\subsubsection{Volumetric Blood concentration in microcirculation}

The discretized diffusion problem related to the system of \eqref{fick_variational} can be formulated as follows:
\begin{itemize}
     \item[] Find $c_h\in  {\mathbb{W}_h}$, such that, for all $ \phi_h\in \mathbb{W}_h$
\begin{equation}
\begin{aligned}
 d( c_h, \phi_h) =  ( s, \phi_h)\,.
\end{aligned}
\label{fick_variational_var_bar}
\end{equation}
\end{itemize}
A numerical solution ${\bf c}$ of \eqref{fick_variational_var_bar} can be obtained via
\begin{equation}
\label{d_fick}
({\bf S} + {{\bf T}}) \, {\bf c} = {\bf w}\,,
\end{equation}
where
{\setlength\arraycolsep{2pt} 
\begin{align*}
{\bf S}_{ij} & = \int_{\hat{\Omega}}  \varsigma \lambda \,\phi_{h,i}\,  {\phi}_{h,j} \,  \hbox{d} \omega_{\hat{\Omega}}
\\ 
{\bf T}_{ij} & = \int_{\hat{\Omega}}   \varepsilon \,\phi_i \, {\phi}_j \,  \hbox{d} \omega_{\hat{\Omega}}
\\
{\bf w}_i & =\int_{ \hat{\Omega}}  s \,\phi_i \,  \hbox{d} \omega_{\hat{\Omega}}\,,
\end{align*}
and ${\bf w}=\begin{pmatrix} {\bf w}_1& {\bf w}_2 &\ldots {\bf w}_m    \end{pmatrix}^T$.



\subsection{Electrical conductivity of brain tissues}

To approximate how excess volumetric blood concentration perturbs the electrical conductivity distribution of the head, we apply Archie's law, a two-term linear combination of power functions that approximates the effective electrical conductivity $\sigma$ for a two-phase mixture of fluid and inhomogeneous medium \cite{j2001estimation,peters2005electrical,glover2000modified,cai2017electrical}. For a two-phase mixture, Archie's law is of the form
\begin{equation}
\label{archie}
\sigma =\sigma_m ( 1 - c )^\tau+\sigma_f {c}^{\beta} \text { with } \tau=\frac{\log \left(1-c^\beta \right) }{ \log (1-c) }\,, 
\end{equation}
where $\sigma_f$ and $\sigma_m$ denote conductivities of fluid and medium, respectively, and $\beta$ is so-called cementation factor \cite{j2001estimation,glover2000modified}, which for  the cerebral cortex is between $3/2$ and $5/3$ \cite{j2001estimation}. The lower and upper limits for $\beta$  follow from spherical  and cylindrical inhomogeneities, which in the cortex are represented by the somas and dendrites of the pyramidal cells, respectively. When substituted in the formula of Archie's law, $\beta = 3/2$ and $\beta = 5/3$ yield a lower and upper bound for the effective electrical conductivity, respectively.
\\
Alternatively, the effective electrical conductivity can be estimated
from above and below via Hashin--Shtrikman upper and lower
bound, defined as
\begin{align}
& \sigma^{+}=\sigma_f\left(1-\frac{3(1-c)\left(\sigma_f-\sigma_m\right)}{3 \sigma_f-c\left(\sigma_f-\sigma_m\right)}\right) \label{hsub} \\
& \sigma^{-}=\sigma_m\left(1+\frac{3 c\left(\sigma_f-\sigma_m\right)}{3 \sigma_m+(1-c)\left(\sigma_f-\sigma_m\right)}\right), \label{hslb}
\end{align}
respectively. Hashin-Shtrikman bounds have shown to be valid
for coated spherical inhomogeneities of all different sizes, filling
the space \cite{hashin1962variational}.

\subsection{Numerical experiments}
\label{Numerical simulationse}

\begin{table}[!ht]
    \centering
    \caption{Compartments of the head model segmentation were obtained using the FreeSurfer software suite \cite{fischl2012freesurfer} together with FieldTrip's \cite{oostenveld2011} segmentation interface, which has been built upon the functions of the SPM12 package \cite{ashburner2014spm12}. The vessel segmentation was performed using the Vesselness algorithm \cite{van2014scikit,frangi1998multiscale}. The piecewise constant background approximation of the electrical conductivity distribution $\sigma_m$ was based on \cite{dannhauer2010}. The subcortical active nuclei were associated with the conductivity of the grey matter \cite{rezaei2021reconstructing}. Vessel conductivity was chosen to match the blood conductivity \cite{gabriel1996compilation}. Skull and skin conductivity values were included in the head segmentation but not in the electrical conductivity atlases, since the segmented blood vessels were fully enclosed by the skull, inside the cranial cavity.}
    \begin{footnotesize}
    \begin{tabular}{llr}
    \hline 
        Compartment   &  Segmentation method & $\sigma_m$ (S m\textsuperscript{-1}) \\
        \hline
 Blood vessels & Vesselness &  0.70\\
    Grey matter & FreeSurfer &   0.33 \\
White matter  &  FreeSurfer  & 0.14 \\
Cerebellum cortex & FreeSurfer's Aseg atlas & 0.33 \\
Cerebellum white matter & FreeSurfer's Aseg atlas  & 0.14 \\
Brainstem & FreeSurfer's Aseg atlas & 0.33 \\ 
Cingulate cortex & FreeSurfer's Aseg atlas & 0.14 \\ 
Ventral Diencephalon & FreeSurfer's Aseg atlas & 0.33 \\
Amygdala & FreeSurfer's Aseg atlas & 0.33 \\ 
Thalamus &  FreeSurfer's Aseg atlas & 0.33 \\
Caudate & FreeSurfer's Aseg atlas & 0.33 \\  
Accumbens & FreeSurfer's Aseg atlas & 0.33 \\ 
Putamen & FreeSurfer's Aseg atlas & 0.33 \\
Hippocampus & FreeSurfer's Aseg atlas & 0.33 \\ 
Pallidum & FreeSurfer's Aseg atlas & 0.33 \\ 
Ventricles & FreeSurfer's Aseg atlas & 0.33 \\ 
       Cerebrospinal fluid (CSF) & FieldTrip-SPM12 & 1.79 \\    
              Skull & FieldTrip-SPM12 &  \\  
              Skin & FieldTrip-SPM12 &   \\  
                         \hline                     
    \end{tabular}
    \end{footnotesize}
    \label{tab:segmentation}
\end{table}

\begin{table}[!h]
    \centering
        \caption{The physical parameters applied in numerical simulations. Gravitational acceleration has been set to its average level and oriented parallel to the z-axis. The electrical conductivity of the blood $\sigma_f$ and the reference pressure were chosen according to  \cite{gabriel1996compilation} and \cite{blanco2017blood}, respectively. Blood density $\rho$,  $\mu$, total CBF $Q$, pressure decay in arterioles $\vartheta$, diameters $D_a$, $D_c$ and $D_v$ of arterioles, capillaries and venules (subtracting the total wall thickness, 2.0E-5, 2.0E-06 and 2E-6, respectively), and their relative total area fractions $\gamma_a$, $\gamma_c$ and $\gamma_v$, respectively, are based on the textbooks \cite{tu2015human,caro2012mechanics}. Microvessel density $\xi$ in cerebral and cerebellar grey and white matter (GM and WM), subcortical WM, and the brain stem was chosen according to the median values observed in \cite{kubikova2018numerical}.  The cementation factor estimates for spherical and cylindrical inhomogeneities approximating somas and dendrites of brain tissues, respectively, are based on \cite{j2001estimation}. Arteriole length has been obtained by substituting other parameter values in (\ref{arteriole_length}) with an appropriate correspondence to the values found in  literature \cite{caro2012mechanics}. }
    \begin{footnotesize}
    \begin{tabular}{lllr}
    \hline
        Property & Param.   & Unit & Value\\
        \hline
        Gravitation (z-component) & $f^{(3)}$ &  m s\textsuperscript{-2} & -9.81\\
         Electrical conductivity of blood & $\sigma_f$ & S m\textsuperscript{-1} & 0.70  \\
        Average diastolic pressure  & $\overline{p}$ & mmHg & 75  \\
        Pulse pressure  & $\hat{p}$ & mmHg & 40 \\
        Arteriole length & $L$ & mm & 0.4 \\
         Blood density & $\rho$ & kg m\textsuperscript{-3} & 1050  \\
         Viscosity & $\mu$ & m\textsuperscript{2} Pa s & 4.0E-03 \\ 
         Total CBF & $Q$ & ml min\textsuperscript{-1} & 750 \\
         Pressure decay in arterioles & $\vartheta$ & \% & 70 \\
         Arteriole diameter  & $D_a$ & m & 1.0E-05 \\
          Capillary diameter & $D_c$ & m & 7.0E-06 \\
           Venule diameter & $D_a$ & m & 1.8E-05 \\
        Arteriole total area fraction & $\gamma_a$ & \% & 25 \\
      Capillary area fraction & $\gamma_c$ & \% & 50 \\
    Venule area fraction & $\gamma_v$ & \% & 25 \\
    Microvessels in cerebral GM & $\xi$ & m\textsuperscript{-2} & 2.4E08 \\
    Microvessels in cerebral WM & $\xi$ & m\textsuperscript{-2} & 1.4E08 \\
    Microvessels in cerebellar GM & $\xi$ & m\textsuperscript{-2} & 3.0E08 \\
    Microvessels in cerebellar WM & $\xi$ & m\textsuperscript{-2} & 1.0E08 \\
    Microvessels in subcortical WM & $\xi$ & m\textsuperscript{-2} & 1.5E08 \\
     Microvessels in brainstem & $\xi$ & m\textsuperscript{-2} & 2.9E08 \\
      Cementation factor (spheres)  & $\beta$ & None & 3/2 \\
       Cementation factor (cylinders) & $\beta$ & None & 5/3 \\
       \hline                        
    \end{tabular}
    \end{footnotesize}
    \label{tab:parameters}
\end{table}


\subsubsection{Segmentation}

We performed numerical experiments using a realistic multi-compartment head model to assess PPE in combination with Fick's and Archie's laws in reconstructing an electrical  conductivity atlas of the brain. This head model was created using the open sub-millimeter precision Magneto Resonance Imaging (MRI) dataset\footnote{doi:10.18112/openneuro.ds003642.v1.1.0}  of CEREBRUM-7T \cite{svanera2021cerebrum}.  The dataset has been acquired using 7T magnetic flux density and, therefore, allows distinguishing the arterial vessels as a separate compartment, as shown in \cite{fiederer2016}. The FreeSurfer Software Suite \cite{fischl2012freesurfer}, FieldTrip's \cite{oostenveld2011} interface for the SPM12 surface extractor \cite{ashburner2014spm12}, and the Vesselness algorithm \cite{van2014scikit,frangi1998multiscale,fiederer2016} were applied to segment the arteries, i.e., domain $\Omega$. The other 16 brain compartments (Table \ref{tab:segmentation}), also enclosed by the skull, constituted $\hat{\Omega}$. Skin and  skull were not included in the electrical conductivity atlas since the domain of arterial vessels $\Omega$ was fully contained by the skull. The microcirculation domain $\hat{\Omega}$ included  microvessel-containing compartments: in addition to skin and skull, the cerebrospinal fluid (CSF) compartment and CSF-filled ventricles were excluded from $\hat{\Omega}$. 

\subsubsection{Vessel extraction}

The vessel extraction process was inspired by the work of Choi {\em et al.} \cite{choi2020cerebral} but is not entirely based on their proposed procedure. The Frangi filter was first applied to the MRI data slice-by-slice, and then the results were aggregated to produce the final arterial model. This process was performed in the following three steps: 

\begin{enumerate}
\item  Frangi's algorithm was applied to both the preprocessed INV2 and T1w slices of the dataset separately. We used the Scikit-Image \cite{van2014scikit} package's implementation of the Frangi method with different parameters for each slice. 
\item After applying the filter to a specific slice of the INV2 and T1w data, we created a mask by superposing  these two layers in an element-wise manner. The mask was binarized using a user-defined threshold level; every element with value less than this threshold was set to zero and, otherwise, to one. 
\item  The segmented cerebral vessels were obtained by iterating the previous steps through an axis of the MRI image and aggregating the results. In order to reduce noise, aggregation was performed separately for sagittal, axial, and coronal slices using the following scoring scheme: if a voxel was detected as a vessel in two or three of the results, it was considered a vessel in the final vessel mask; otherwise, it was neglected.
\end{enumerate}


\subsection{Numerical simulations}

The numerical simulations were performed using the open Zeffiro Interface \cite{sampsa_pursiainen_2023_8200136, he2019zeffiro} (ZI) toolbox. Solvers for PPE, Fick's law, and Archie's law were implemented as Matlab codes and included in ZI\footnote{\url{https://github.com/sampsapursiainen/zeffiro_interface}}. Using ZI, the volume of the head segmentation was discretized by a tetrahedral FE mesh of 6.4 M nodes and 32 M elements, corresponding approximately to 1 mm overall resolution. Of these,  $\Omega$ contained 0.15 M nodes and 0.54 M tetrahedra, and  $\hat{\Omega}$ 2.4 M nodes and 11 M tetrahedra. Other relevant parameter values can be found in Table \ref{tab:parameters}. 
\\
After solving the discretized pressure in $\Omega$ from \eqref{d_ppe}, the excess blood concentration $c$ in $\hat{\Omega}$ was obtained by solving (\ref{d_fick}). Archie's law (\ref{archie}) was evaluated using the excess concentration $c$ and two alternative cementation factors $\beta=5/3$ and $\beta=3/2$  corresponding to cylindrical and spherical tissue inhomogeneities and constituting a lower and upper bound approximation for the electrical conductivity, respectively. In addition, the Hashin--Shtrikman lower and upper bounds (\ref{hslb}) and (\ref{hsub}) were evaluated as an alternative approximation. 
\\
As a result, altogether five different electrical conductivity atlases were obtained: one corresponding to the piecewise constant background distribution and four effective electrical conductivity atlases following from the different mixture models. To examine differences between the background ${ \sigma}_{\hbox{\scriptsize bg}}$ and effective ${ \sigma}_{\hbox{\scriptsize eff}}$ distributions, the following difference measures (\%) were evaluated: 
{\setlength\arraycolsep{2 pt} 
\begin{eqnarray}
   \hbox{RDM} & = & 100 \left\| \frac{{ \sigma}_{\hbox{\scriptsize eff}}}{\| { \sigma}_{\hbox{\scriptsize eff}} \|_1} -  \frac{{ \sigma}_{\hbox{\scriptsize bg}}}{\| { \sigma}_{\hbox{\scriptsize bg}} \|_1}   \right\|_1\,, 
   \\
   \hbox{MAG} & = & 100 \frac{ \|  { \sigma}_{\hbox{\scriptsize eff}} \|_1}{\| { \sigma}_{\hbox{\scriptsize bg}} \|_1} -  100\,, 
   \\
   \hbox{PRD} & = & 100 \frac{ |  {\sigma}_{\hbox{\scriptsize eff}} -{\sigma}_{\hbox{\scriptsize bg}} | }{\| { \sigma}_{\hbox{\scriptsize bg}} \|_\infty}\,.  
\end{eqnarray}}
Of these, RDM (relative difference measure) evaluates the overall relative difference between normalized distributions ${ \sigma}_{\hbox{\scriptsize eff}}$ and ${ \sigma}_{\hbox{\scriptsize bg}}$, MAG (magnitude measure) shows the average amplitude of ${ \sigma}_{\hbox{\scriptsize eff}}$ compared to ${ \sigma}_{\hbox{\scriptsize bg}}$, and PRD (pointwise relative difference) is the difference between  ${ \sigma}_{\hbox{\scriptsize eff}}$ and ${ \sigma}_{\hbox{\scriptsize bg}}$ in relation to $\| { \sigma}_{\hbox{\scriptsize bg}} \|_\infty$ for each point.


\section{Results}
\label{sec: Results}

This section describes the results of our four-phase numerical simulations, where (i) the brain model was segmented to obtain $\Omega$ and $\hat{\Omega}$ as well as an estimate for the background tissue concentration; (ii) the  blood pressure $p$ and velocity ${\bf u}$ following from the present PPE model were found; (iii) Fick's law was applied to find an estimate $c$ for excess blood concentration in the microcirculation domain $\hat{\Omega}$; finally, (iv) effective electrical conductivity atlases were reconstructed by estimating the effect of the excess blood concentration on the background distribution via Archie's law and Hashin--Shtrikman bounds. The results of the numerical experiments have been included in Figure \ref{fig:segmentation} showing the head segmentation results; Figure \ref{fig:PPE_Fick_solutions} with sagittal, axial, and coronal illustrations of the pressure and concentration distributions obtained as numerical solutions of PPE and Fick's law; Table \ref{tab:RDM_and_MAG} including RDM and MAG for the estimates of background and effective electrical conductivity atlases obtained via Archie's model;  histograms  showing the value distributions of the blood pressure, velocity, and volumetric blood concentration (Figure \ref{fig:histograms_1})  and the PRD of the electrical conductivity distribution (Figure \ref{fig:histograms_2}); and Figure \ref{fig:conductivity_atlases} visualizing effective conductivity atlases for sagittal, axial, and coronal projections.

\begin{figure}[h!]
\centering
\begin{minipage}{8.0 cm}
    \centering
    \includegraphics[width = 7.9 cm]{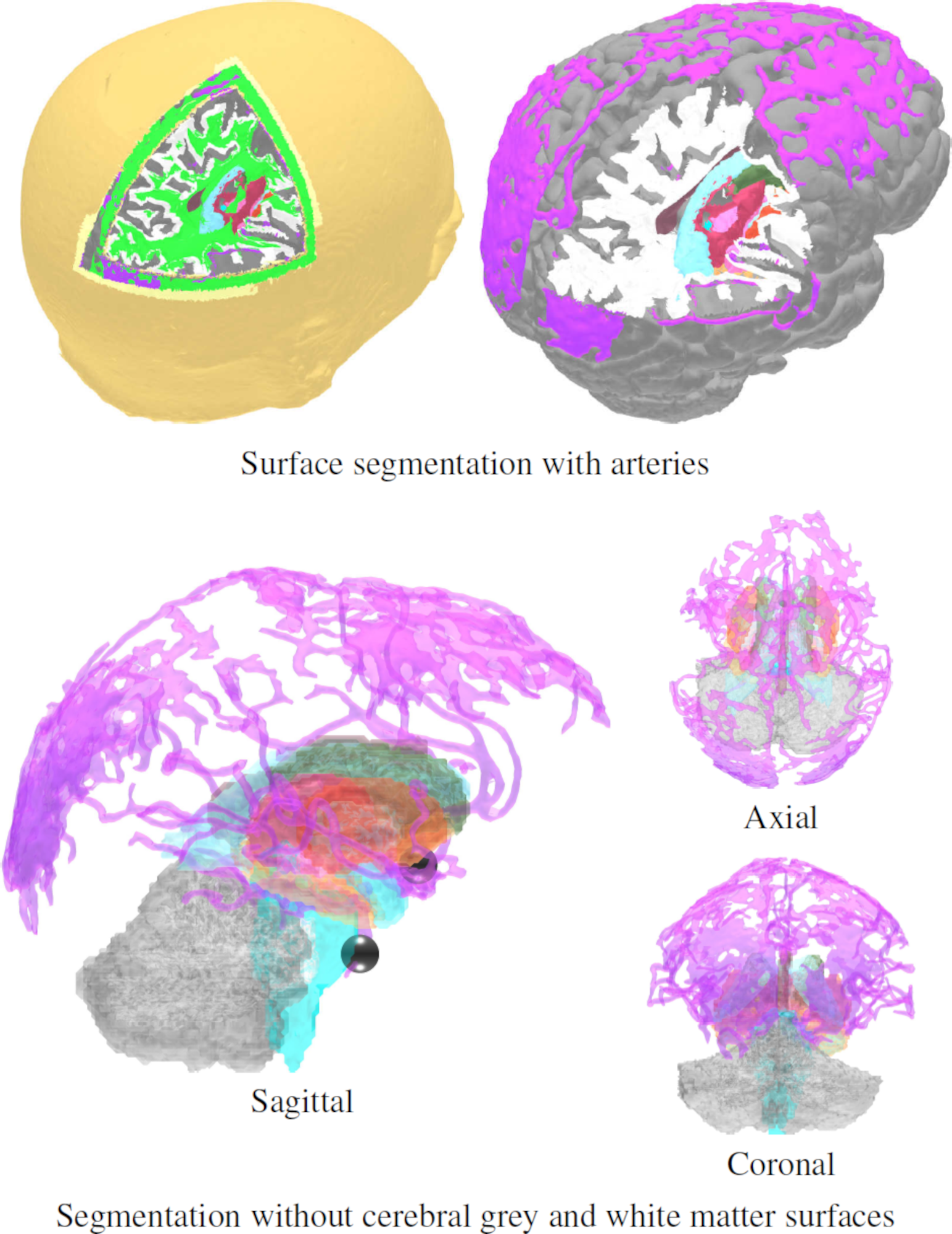}
\end{minipage}

\caption{{\bf Top:} A surface segmentation was obtained for the subject sub-045 of the open CEREBRUM-7T dataset\textsuperscript{1} \cite{svanera2021cerebrum} containing 7-Tesla (T) MRI data. Surface meshes (Table \ref{tab:segmentation})  were extracted using the segmentation routines of the FreeSurfer software suite \cite{fischl2012freesurfer} and FieldTrip's \cite{oostenveld2011} interface for SPM12's surface extractor \cite{ashburner2014spm12}. The cerebral arterial vessels shown on the right (violet) were segmented using Frangi's Vesselness algorithm \cite{van2014scikit,frangi1998multiscale}  as suggested in \cite{fiederer2016}. The clipping planes correspond to the sagittal, axial, and coronal surface slices shown in this study. {\bf Bottom:} Sagittal, axial, and coronal projections of the segmentation without cerebral grey and white matter surfaces, showing the arteries and how they integrate with the subcortical structures. Two spherical surfaces show the locations of two 10 mm diameter inlets placed in the vicinity of the anterior and posterior end-points of the Circle of Willis at the base of the brain. The locations  correspond to the  basilar artery  and the junction of anterior cerebral and anterior communicating arteries.   }
\label{fig:segmentation}
\end{figure}

\begin{figure}[h!]
\centering
\begin{minipage}{8.0cm}
    \centering
    \includegraphics[width = 7.9cm]{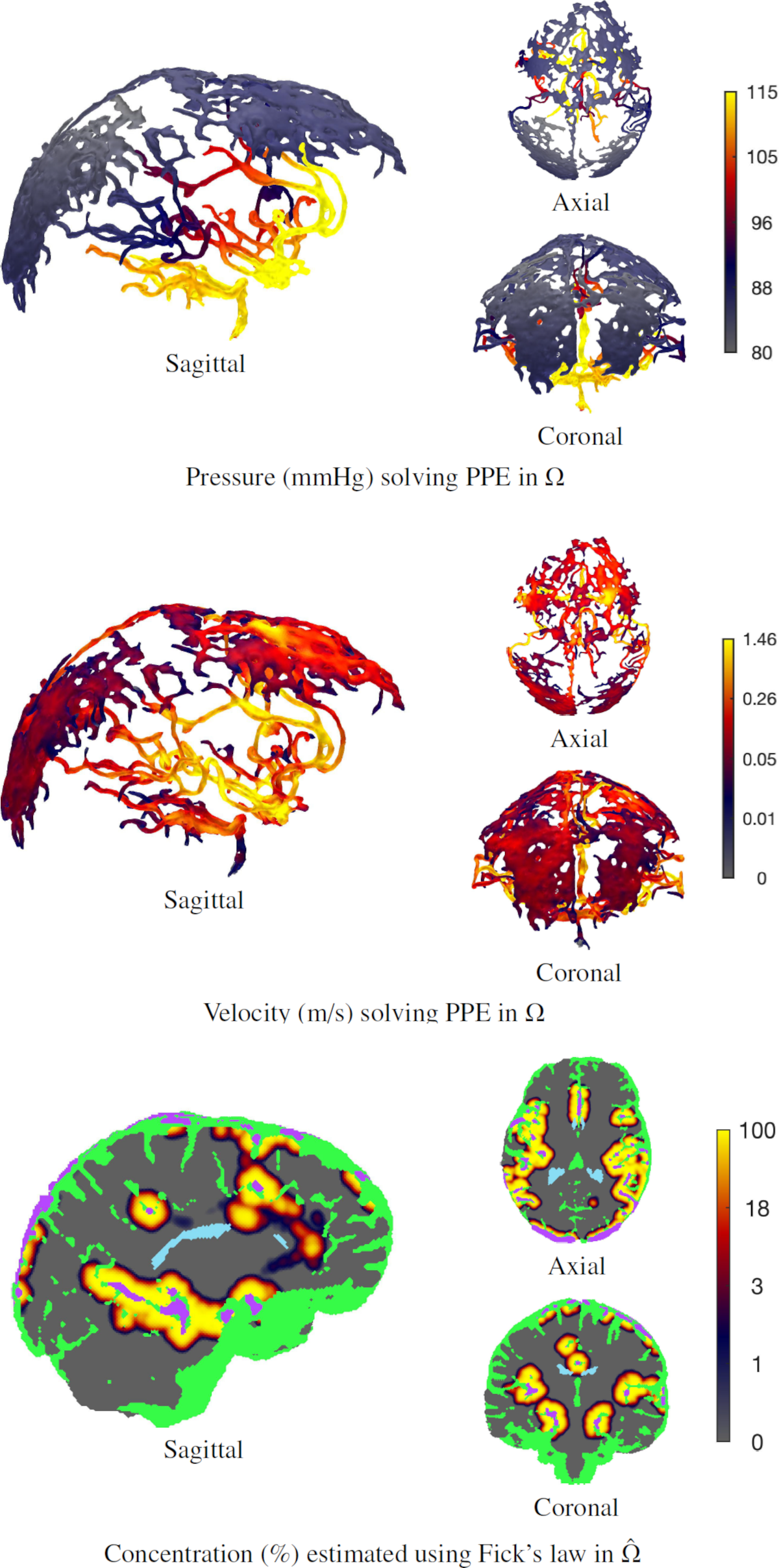}
\end{minipage}

\caption{Approximations  of systolic blood pressure, velocity, and volumetric blood concentration illustrated on a logarithmic color scale {\bf Top:} Sagittal, axial, and coronal views of the pressure distribution (mmHg) solving PPE  in domain $\Omega$. The large subcortical arteries can be observed to have a higher pressure up to 115 mmHg, compared to the smaller arteries of the cortical branches, for which the value range extends down to 80 mmHg. Values larger than  $q_{75} + 1.5 (q_{75} - q_{25})$, where $q_{25}$ and  $q_{75}$ denote the 25th and 75th percentiles, respectively, have been excluded as outliers.
{\bf Center:} The velocity distribution  (m/s) in $\Omega$. The greatest values extend up to or close to 1.46 m/s. The anterior and middle cerebral arteries can be observed to have overall greater velocities than the posterior cerebral arteries or the arteries with smaller cortical branches. Outliers larger than  $q_{75} + 1.5 (q_{75} - q_{25})$ have been excluded.
{\bf Bottom:} Sagittal, axial, and coronal surface cuts of the estimated excess blood concentration  (\%) in the microvessel domain $\hat{\Omega}$ as predicted by Fick's law. The greatest values are obtained in the vicinity of the arterial vessel boundary $\partial \Omega$, i.e., the boundaries of the violet subdomains.  The concentration decays to zero within 10--20 mm of a $\partial \Omega$. The visible structures not included in $\hat{\Omega}$ include the arterial vessels, i.e.\ $\Omega$, ventricles (blue), and cerebrospinal fluid (green).}
\label{fig:PPE_Fick_solutions}
\end{figure}

\begin{table}[h!]
    \centering
    \caption{Relative difference measure (RDM) and magnitude (MAG) difference for the  mixture models applied in this study.}
    \label{tab:RDM_and_MAG}
    \begin{footnotesize}
    \begin{tabular}{llrrr}
\hline 
Mixture model &  Type & RDM & MAG \\ 
\hline
Archie's law for spheres & Lower bound &  11.19 & 1.65 \\
Archie's law for cylinders & Upper bound & 11.32 & 1.69 \\
 Hashin--Shtrikman & Lower bound & 11.29 & 1.72 \\
 Hashin--Shtrikman & Upper bound & 11.85 & 1.86  \\
 \hline
\end{tabular}
\end{footnotesize}
\end{table}
\begin{figure}[h!]
\centering
\begin{minipage}{8.0cm}
    \centering
    \includegraphics[width = 7.9cm]{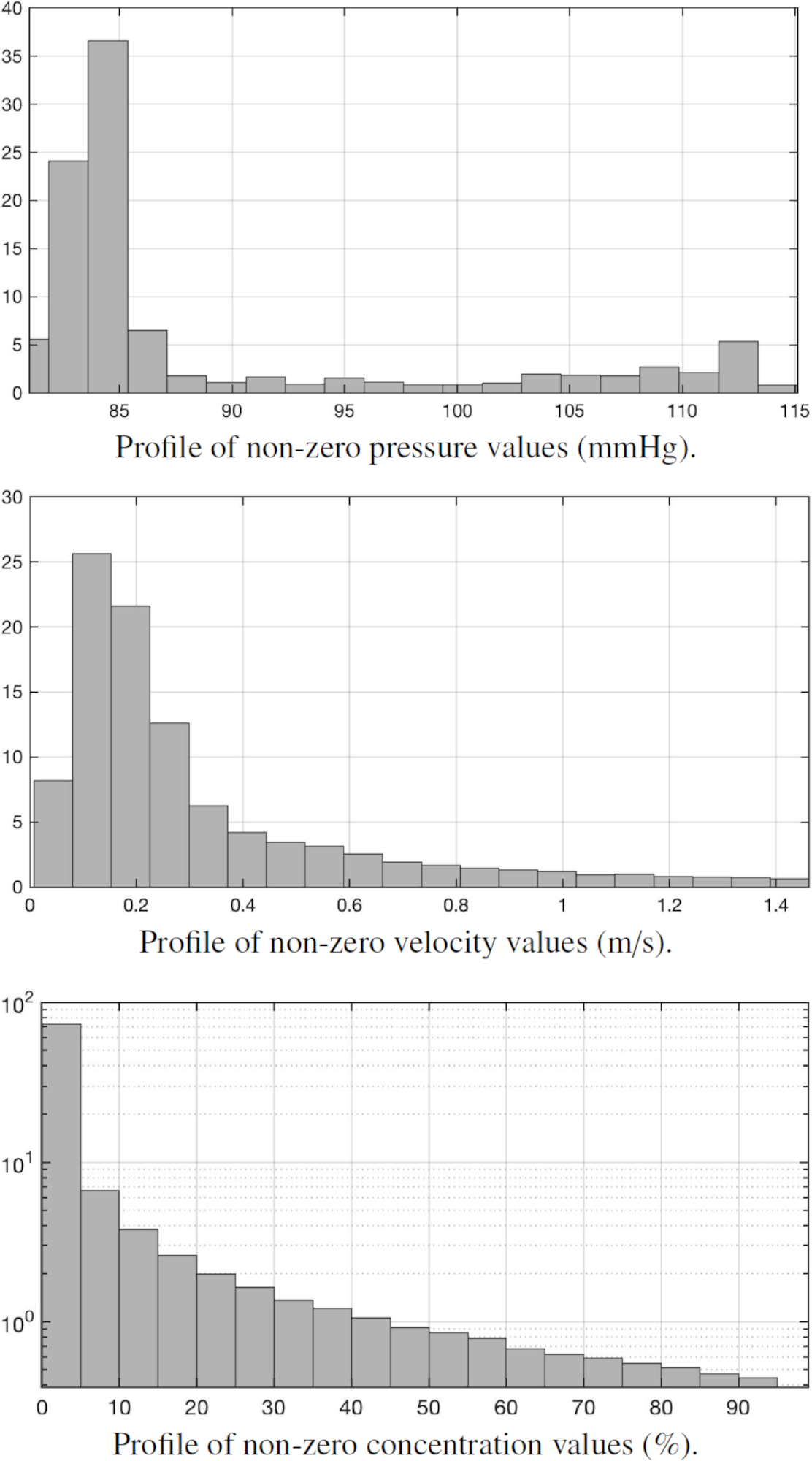}
\end{minipage}
\caption{Histograms for the distributions of blood pressure, velocity, and volumetric concentration. The horizontal axis shows the value of the pressure (top), velocity (center), or concentration (bottom), and the vertical one shows the corresponding volume fraction of $\hat{\Omega}$ (\%) on a logarithmic scale. Each visualization includes 20 bars. For pressure and velocity, outliers larger than  $q_{75} + 1.5 (q_{75} - q_{25})$, where $q_{25}$ and  $q_{75}$ have been excluded, the concentration is shown for the values above 0 and below 100 \%.  }
\label{fig:histograms_1}
\end{figure}

\begin{figure}[h!]
    \centering
\begin{minipage}{8.0cm}
    \centering
    \includegraphics[width = 7.9cm]{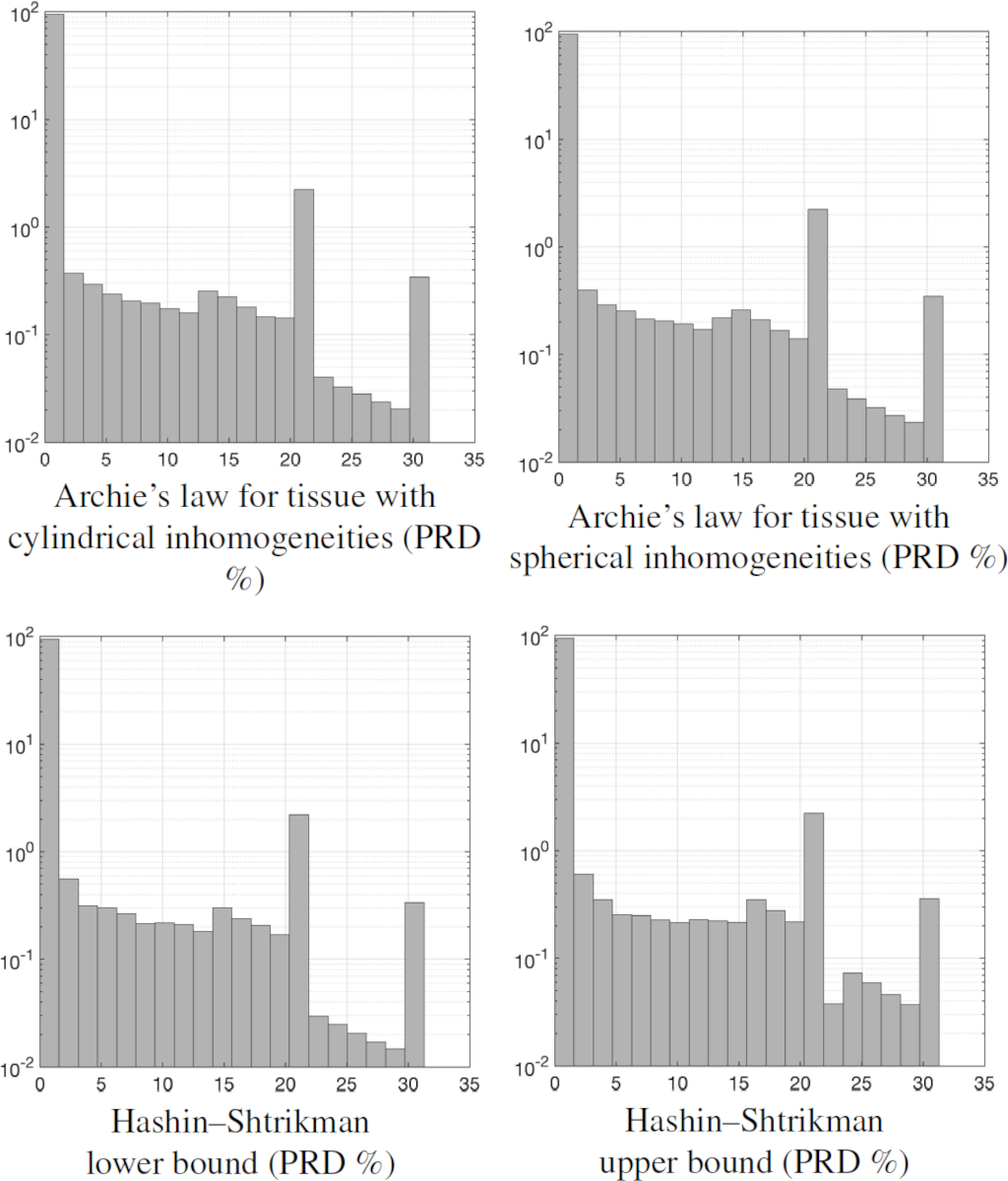}
\end{minipage}
    
    \caption{ Histograms showing the distribution of the relative difference PRD (\%) between the piecewise constant background and the approximated effective electrical conductivity in those parts of $\hat{\Omega}$, where PRD exceeds 0.1 \%. The horizontal axis shows the value of PRD, and the vertical one shows the volume fraction (\%) of the corresponding computing domain on a logarithmic scale. Each visualization includes 20 bars.   }
    \label{fig:histograms_2}
    \end{figure}

\begin{figure*}[h!]
\centering
\begin{minipage}{16.0cm}
    \centering
    \includegraphics[width = 15.5cm]{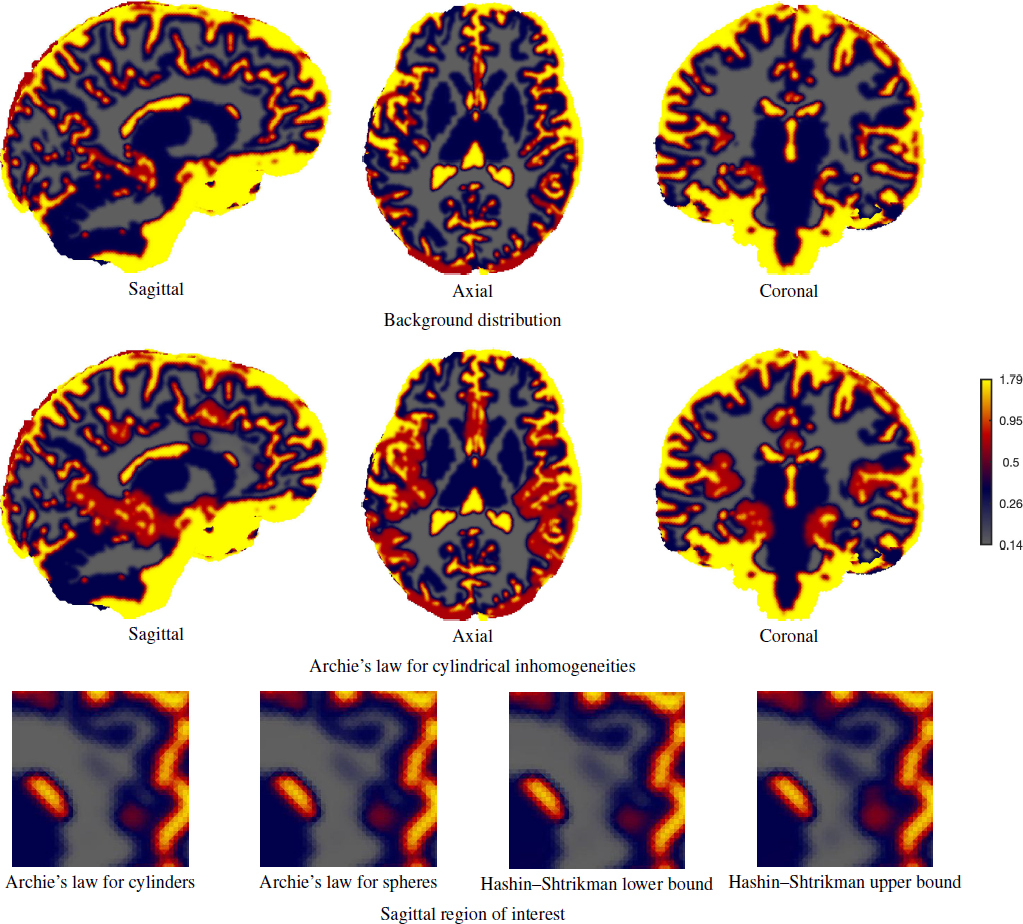}
\end{minipage}

    \caption{   {\bf Top row:} Sagittal, axial, and coronal visualization of the piecewise  constant background distribution $\sigma_m$ in which each compartment corresponds to a constant electrical conductivity  given in  Table \ref{tab:segmentation}. Compared to the background, each effective electrical conductivity distribution the conductivity in the vicinity of the arteries is emphasized. 
    \label{fig:background_conductivity} {\bf Center row:}: Sagittal, axial, and coronal illustration of the effective electrical conductivity atlas obtained with Archies's law assuming that the tissue inhomogeneities are  cylindrical.  {\bf Bottom row}:  A comparison between the different effective electrical conductivity atlases in a sagittal region of interest. The differences can be observed to be minor. Hashin--Shtrikman upper bound shows more spread effects of the blood flow than the other estimates.
 }
    \label{fig:conductivity_atlases}
\end{figure*}


\subsection{Phase (i): segmentation}

The segmentation obtained (Figure \ref{fig:segmentation}) shows that the Vesselness algorithm found a connected set of arteries inside the skull. Finding a connected set was considered important for the continuity of the PPE solution. Therefore, it was prioritized in the segmentation process over resolution, which led to an overlap of the tightly packed vessel bundles following from cortical branches. This geometrical distortion was considered unavoidable, due to the relatively small diameter of the cerebral arteries compared to the 1 mm resolution of the discretization.

\subsection{Phase (ii): PPE}

As shown by Figures \ref{fig:PPE_Fick_solutions} and \ref{fig:histograms_1}, excluding the outliers, the total pressure distribution $p$ varies between 80 mmHg and 115 mmHg in the artery domain $\Omega$. The values are the greatest at the base of the brain, close to the  basilar artery, the deepest vessel in $\Omega$, which is oriented nearly vertically in front of the brainstem. The pressure gradually decreases when moving towards the cerebral cortex, where branched, overlapping structures are dominant. This result is expected since the total vessel area gradually increases as the branching occurs. Thus, the pressure gradually decreases as the blood flows towards smaller vessels, eventually entering the microcirculation domain $\hat{\Omega}$. 
\\
The blood velocity profile varies between 0 and 1.46 m/s, excluding the outliers (Figures \ref{fig:PPE_Fick_solutions} and \ref{fig:histograms_1}). 
 The greatest values were observed in anterior and middle cerebral artery which had overall greater velocities than posterior cerebral artery or the smaller arteries  in cortical branches.

\subsection{Phase (iii): excess blood concentration}

The excess blood concentration estimate $c$ obtained via Fick's law (Figure \ref{fig:PPE_Fick_solutions}) expectedly decays when moving away from the arteries in $\Omega$ that bring blood into the microcirculation domain $\hat{\Omega}$. The amplitude of $c$ vanishes at a distance of 10–20 mm from the arteries. Consequently, the effect of the concentration on the electrical conductivity atlases is, within the present model, limited to this distance.

\subsection{Phase (iv): effective electrical conductivity atlases}
As shown by Table \ref{tab:RDM_and_MAG}, the reconstructed  atlases differ overall by 1.65--1.86 \% and 11.19--11.85 \% with respect to the MAG and RDM, respectively. The major part of the differences is limited to a few percent of the volume fraction (Figure \ref{fig:histograms_1}), which is obvious based on the excess concentration estimates obtained in the third phase and is verified by PRD, showing that locally the largest differences are approximately 30 \% with respect to the maximum (1.79 S/m) of the background distribution. Those can be related to regions close to the vessels where the excess blood concentration is close to one.  
Compared to the other atlases, the Hashin--Shtrikman upper bound yields a greater volume fraction of PRD  values slightly below 30 \%. Spatial differences between atlases corresponding to different mixture models are minor, which can be observed based on Figure \ref{fig:conductivity_atlases}.


\section{Discussion}
This study demonstrated that a simplification of the Stokes equation (SE), namely the pressure--Poisson equation (PPE) \cite{pacheco2021continuous} allows for the estimation of the blood pressure in cerebral arteries segmented from open 7T MRI data \cite{svanera2021cerebrum}. We introduced a boundary condition (BC) based on the Hagen--Poisseuille model \cite{caro2012mechanics} to bind PPE with the governing physical parameters of CBF, particularly the microvessel diameters \cite{caro2012mechanics} and densities \cite{kubikova2018numerical}. Through the formulation of the PPE and the BC, we obtained an equivalent formulation of the incompressible SE.
\\
Based on the solution of PPE, we estimated the excess volumetric blood concentration in microvessels caused by the pressure using Fick's law \cite{berg2020modelling,arciero2017mathematical}, the parameters of which were likewise obtained via the Hagen--Poisseuille model. Finally, the effect of the excess concentration on the brain tissues was approximated using Archie's law as well as the upper and lower bounds of Hashin and Shtrikman \cite{j2001estimation}. Our four-phase modelling process (i) first generates a multi-compartment FE mesh and a piecewise conductivity atlas of the head, then (ii) finds a solution for PPE and (iii) Fick's law, and finally, (iv) reconstructs an atlas. 
\\
As the strength of our approach, we suggest the direct applicability of NSEs and their approximations to individual datasets
to potentially improve the quality of electrophysiological brain modelling. Thereby, 
the results of this study complement the recently developed statistical approaches
following from 1D NSEs \cite{moura2021anatomical,lahtinen2023silico}. Overall, this study advances the electrical conductivity approximation techniques applicable in electrophysiological modalities, where 
dynamic components affecting the conductivity atlases are
typically absent, e.g.,  EEG/MEG source localization  \cite{dannhauer2010}, tES \cite{herrmann2013transcranial}, and EIT \cite{cheney1999electrical,fernandez2011estimation,moura2021anatomical}. In particular, we have shown how to incorporate the dynamic blood flow effects in modelling the electrical conductivity when a high-resolution and high-intensity MRI segmentation with distinguishable blood vessels is available  \cite{fiederer2016}.
\\
We consider PPE an appropriate approximation of SE under the present modelling framework, since the MRI data does not allow a perfect segmentation of cerebral arteries. Hence, a more advanced solution based on NSEs might at least partly suffer from the limited accuracy of the segmentation. The current results suggest a spatial pressure variation between 80 and 115 mmHg, which matches with $\pm$5 mmHg discrepancy the normotensive systolic pressures found via numerical simulation in \cite{blanco2017blood} for arteries with diameter greater than 0.5 mm (e.g., 117 mmHg for 4.839 mm internal carotid artery, 113 mmHg for 3.448 mm basilar artery, 110 mmHg for 0.545 mm distal medial striate artery, and 85 mmHg for 1.039 mm posterior parietal branch of the middle cerebral artery).
\\
The velocity distribution can be considered appropriate based on experimental transcranial doppler ultrasound studies, e.g., \cite{gao2002optimal,kim2011blood}. A peak systolic velocity of 1.4 m/s has been suggested as a threshold criterion for mild stenosis in an intracranial vessel \cite{gao2002optimal}.  While our results reach that threshold, the vast majority of the velocity distribution stays below 1.4 m/s, i.e., in the range found for healthy subjects. While there are some obvious artifacts, the structure of the observed velocity distribution reflects the existing literature, as the greatest values were observed in the larger vessels, of which the middle and anterior cerebral arteries have overall greater velocities than the posterior cerebral artery or the vessels with a smaller diameter in the cortical branches \cite{ahn1991recording,caro2012mechanics}.
\\
The present model of excess blood flow and concentration near the arteries, resulting from arterial pressure, builds upon the utilization of Fick's law within the intricate network of microvessels \cite{berg2020modelling,arciero2017mathematical}. This model incorporates the assumption of a linear pressure drop  along the length of microvessels and considers the adaptability of microvessel diameter and length to regulate blood flow \cite{epp2020predicting}. Since venous vessels or any vessels outside the brain are difficult to distinguish based on MRI data \cite{fiederer2016}, we introduced a uniform sink term that covers the entire brain. Although directly validating the accuracy and reliability of numerical simulation results may be impractical, an observable correlation can be found between the estimated concentration distribution and whole-brain CBF scans obtained through MRI, positron emission tomography, and single-photon emission computed tomography \cite{chen2018evaluation, liu2018resting, taber2005blood}. A potential feature affecting the accuracy of our estimate obtained for the volumetric concentration is the fluid exchange between the microvessels and the tissue interstition, which was omitted in deriving the diffusion coefficient. We, however, deem that the uncertainty related to the sink term is likely to be a dominant error source as the venous flow rate is greater than the interstitial fluid exchange.
\\
Our excess concentration estimates and the general knowledge of perfusion scans both demonstrate that the excess blood in the brain is not limited to the large arteries but is somewhat spread in the neighborhood of those. Thus, the present distributions obtained using Archie's law and Hashin--Shtrikman bounds, which, in this study, were designed to take this aspect into account, might represent an improvement compared to the piecewise constant electrical conductivity estimates when a segmentation of the arteries is available. Furthermore, as suggested in \cite{fiederer2016}, an additional challenge arises due to a significant portion of the blood being limited to the subcortical region of the brain. This area is acknowledged to exhibit compromised localization and stimulation accuracy when utilizing non-invasive techniques such as EEG/MEG, EIT, and tES. We consider studying this aspect from the application point of view as important future work, while the focus of this study clearly was on establishing an appropriate modelling framework in order to take dynamic blood flow effects into account in an individualized MRI-based electrical conductivity atlas of the brain. The knowledge of the mixture models suggests \cite{j2001estimation} that while Hashin--Shtrikman bounds give valuable information about the potential modelling discrepancies, they do not achieve the accuracy of Archie's law, which is better suited for brain tissues. While the discrepancies between the models can be considered significant regarding the local value of electrical conductivity, the global differences observed in this study can be considered minor.
\\
As for the limitations of this study, based on the above reasoning, we do not expect that our model in its current form would be applicable for obtaining other than coarse estimates of blood pressure, velocity, volumetric concentration, and electrical conductivity distribution; the current results are limited to showing the feasibility of evaluating the PPE approximation for pressure and velocity to obtain estimates for concentration and electrical conductivity directly based on individualized data in electrophysiological modelling. As a governing limitation, we consider the weak distingushability of blood vessels from MRI data recorded with a magnetic flux density lower than 7T, as most datasets applied in electrophysiological head model generation comprise 3T or 4T measurements. Moreover, the present mathematical model is simplified and thus limited in its capability to approximate cerebral circulation.  Features omitted in this model include any time-dependencies of the blood flow, the contribution of viscoelastic arterial walls \cite{raghu2011comparative} and fluid exchange between microcirculation and tissue interstitium \cite{notaro2016mixed}. A simplified model is, however, well-motivated in this study due to the incompleteness of the MRI data with respect to obtaining blood vessel segmentation.
\\
\subsection{Future prospects}
While we achieved an appropriate overall match with the existing results, such as a similar range of values and distribution of the electrical conductivity perturbation as shown in \cite{moura2021anatomical,lahtinen2023silico}, further  studies are required to validate the present approach. 
The focus of future work will be on approximating the velocity field of cerebral circulation, which would necessitate solving a time-dependent system, e.g., the Navier--Stokes equation. This work will involve a complex and multidisciplinary study focusing on the electrical conductivity of the brain, incorporating mathematical modelling, numerical simulations, and data analysis to advance our understanding of brain function. We will use a segmentation of arterial blood vessels in the brain and a dynamic solution of NSEs coupled with Fick's law to represent microcirculation in a specific domain. For this purpose, we will solve a discretized non-Newtonian NSEs system using a two-stage process involving pressure estimation and velocity field updates. This includes regularization techniques to ensure numerical stability. The motivation behind this research lies in its significance for EEG, tES, and EIT, particularly, in scenarios characterized by dynamic modelling. 
\\
Looking ahead, the future path of this research will necessitate deep exploration of theoretical foundations, such as geometry, boundary conditions and viscosity models, as well as experimental validation.  An important aspect is, for example, how computing geometry influences the results obtained. To enlighten this, for example, a modelling study can be conducted to validate the performance of the current versions of PPE and Fick's law within a simplified computational geometry, such as a cylindrical domain. Furthermore, an experimental study can be conducted to compare the results of perfusion imaging with numerically simulated volumetric blood concentration.  We also aim at multi-subject studies and evaluations conducted at different scales.



\appendix

\section{Motivation for approximating NSEs}

\label{app:Diffusion model}
The Cauchy stress tensor, which is the symmetric component of the gradient of the velocity field, ${\bf u}$, is always split into two parts and written as $\sigma({\bf u}, p)=-{\bf I}\,p+\mu \,{\bf s}{\bf u}$, where {\bf I} is the unit tensor. ${\bf s}{\bf u}=\nabla {\bf u}+(\nabla {\bf u})^T$ and $\mu \,{\bf s}{\bf u}$ stand as a stress tensor and deformation (strain) rate tensor, respectively. Following is a definition of the diffusion force \cite{samavaki+tuomela}
\begin{equation}
 \begin{aligned}
{\bf L}{\bf u}:=\mathsf{div}({\bf s}{\bf u})=\Delta_B {\bf u}+\mathsf{grad}(\mathsf{div}({\bf u}))+{\bf Ri}({\bf u})\,,
\end{aligned}
\label{lu}
\end{equation}
where $\Delta_B$ is the Bochner Laplacian and ${\bf Ri}$ is the Ricci curvature \cite{samavaki_Ricci_2021}, which is given in the local coordinates by the Riemann curvature tensor ${\bf R}=R^h_{kij}$ as follows 
\begin{equation}
 \begin{aligned}
({\bf Ri}({\bf u}))_j:=({\bf Ri}:{\bf u})_j=R^k_{kij}g^{ih}u_h=R_{ij}u^i\,.
\end{aligned}
\label{ricci}
\end{equation}

\setcounter{lemma}{0}
\renewcommand{\thelemma}{\Alph{section}\arabic{lemma}}
\begin{lemma}
\[
 \mathsf{div}({\bf L}{\bf u})=2
  \mathsf{div}( \Delta_B {\bf u})=2\Delta_B( \mathsf{div}({\bf u}))+2
 \mathsf{div}({\bf Ri}({\bf u}))\,.
\]
\label{div-lu}
\end{lemma}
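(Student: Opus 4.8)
The plan is to work in local coordinates and reduce everything to the Ricci identity (the commutation rule $[\nabla_{a},\nabla_{b}]T = R\cdot T$) together with the symmetries of the Riemann tensor. The first equality, $\mathsf{div}({\bf L}{\bf u})=\mathsf{div}(\mathsf{div}(\nabla{\bf u}))$, I would read off directly from the definition ${\bf L}{\bf u}=\mathsf{div}({\bf s}{\bf u})$ in \ref{app:Diffusion model} and the decomposition \eqref{lu}: applying $\mathsf{div}$ to ${\bf s}{\bf u}=\nabla{\bf u}+(\nabla{\bf u})^{T}$, the skew part of $\nabla{\bf u}$ is, up to a constant, the exact $2$-form $\mathbf{d}{\bf u}^{\flat}$, and the iterated divergence of a $2$-form vanishes (this is $\delta^{2}=0$ combined with the fact that a symmetric curvature contraction against an antisymmetric tensor is zero), so $\mathsf{div}(\mathsf{div}((\nabla{\bf u})^{T}))$ contributes the same curvature-corrected term as $\mathsf{div}(\mathsf{div}(\nabla{\bf u}))$. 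Thus the geometric content sits entirely in the second equality, which is the one I would actually prove.

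For that, write $(\mathsf{div}(\nabla{\bf u}))^{i}=g^{ab}\nabla_{a}\nabla_{b}u^{i}=(\Delta_{B}{\bf u})^{i}$, so $\mathsf{div}(\mathsf{div}(\nabla{\bf u}))=g^{ab}\nabla_{i}\nabla_{a}\nabla_{b}u^{i}$ with $i$ summed. The idea is to push the outer $\nabla_{i}$ inward until it reaches $u^{i}$ and contracts into $\mathsf{div}\,{\bf u}=\nabla_{i}u^{i}$. First commute $\nabla_{i}$ past $\nabla_{a}$ acting on the $(1,1)$-tensor $\nabla_{b}u^{i}$; the commutator produces two Riemann contractions, one in the upper index giving a Ricci term $R_{ma}\nabla_{b}u^{m}$ and one in the lower index giving $-R^{m}{}_{bia}\nabla_{m}u^{i}$. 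Next commute the surviving $\nabla_{i}$ past $\nabla_{b}$ acting on $u^{i}$, giving $\nabla_{b}(\mathsf{div}\,{\bf u})+R_{mb}u^{m}$, and then apply $\nabla_{a}$ and contract with $g^{ab}$. Four terms result: $\Delta_{B}(\mathsf{div}\,{\bf u})$; the term $\nabla^{b}(R_{mb}u^{m})=\mathsf{div}(\mathsf{Ri}({\bf u}))$; and the two leftover curvature pieces $g^{ab}R_{ma}\nabla_{b}u^{m}$ and $-g^{ab}R^{m}{}_{bia}\nabla_{m}u^{i}$.

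The crux—and the step I expect to be the only delicate one—is showing that these last two pieces cancel. Using the pair symmetry $R_{abcd}=R_{cdab}$, the antisymmetries, and (for the lower-index term) the first Bianchi identity, the contraction $g^{ab}R^{m}{}_{bia}$ collapses to the raised Ricci tensor $R^{m}{}_{i}$, while $g^{ab}R_{ma}\nabla_{b}u^{m}$ equals $R^{m}{}_{i}\nabla_{m}u^{i}$ after relabelling and using that the Ricci tensor is symmetric; hence the difference is zero and what remains is exactly $\Delta_{B}(\mathsf{div}\,{\bf u})+\mathsf{div}(\mathsf{Ri}({\bf u}))$. This is precisely the point where a wrong sign or index choice in \eqref{ricci} would surface, so I would double-check it by the coordinate-free route: transport the identity to $1$-forms via the musical isomorphism and combine the Bochner--Weitzenb\"ock relation $\Delta_{H}=\Delta_{B}+\mathsf{Ri}$ with the fact that the codifferential $\delta$ commutes with the Hodge Laplacian $\Delta_{H}$, which reproduces the same three terms. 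The rest is routine tensor bookkeeping.
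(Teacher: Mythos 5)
Your treatment of the second equality is correct and is essentially the paper's own argument: expand $\mathsf{div}(\mathsf{div}(\nabla{\bf u}))=g^{ab}\nabla_i\nabla_a\nabla_b u^i$ in local coordinates, commute the outer covariant derivative inward via the Ricci identity, and recognize $\Delta_B(\mathsf{div}({\bf u}))$ and $\mathsf{div}(\mathsf{Ri}({\bf u}))$ in what remains. You are in fact more explicit than the paper about the two leftover first-order curvature contractions $g^{ab}R_{ma}\nabla_b u^m$ and $-g^{ab}R^{m}{}_{bia}\nabla_m u^i$ cancelling through the pair symmetry of the Riemann tensor and the symmetry of Ricci; the paper's proof keeps only the single Ricci term that is later absorbed into $\mathsf{div}(\mathsf{Ri}({\bf u}))$ by the product rule and never displays that cancellation. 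The Weitzenb\"ock cross-check (the codifferential commuting with the Hodge Laplacian) is a sound independent confirmation of the signs.

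The gap is in the first equality, and your own argument exposes it. You correctly observe that the iterated divergence of the antisymmetric part of $\nabla{\bf u}$ vanishes, so that $\mathsf{div}(\mathsf{div}((\nabla{\bf u})^{T}))=\mathsf{div}(\mathsf{div}(\nabla{\bf u}))$. But with the paper's definition ${\bf s}{\bf u}=\nabla{\bf u}+(\nabla{\bf u})^{T}$ (no factor $1/2$), this yields $\mathsf{div}({\bf L}{\bf u})=\mathsf{div}(\mathsf{div}(\nabla{\bf u}))+\mathsf{div}(\mathsf{div}((\nabla{\bf u})^{T}))=2\,\mathsf{div}(\mathsf{div}(\nabla{\bf u}))$, which contradicts the first equality as written rather than proving it; you cannot ``read it off directly.'' Either the right-hand side of the lemma should carry a factor of $2$ (note that equation \eqref{pp} of the paper has the coefficient $2\mu$ on $\nabla\cdot(\mathsf{Ri}({\bf u}))$, consistent with the factor-$2$ version), or ${\bf s}{\bf u}$ must be taken as the symmetrized gradient with the conventional $1/2$. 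The paper's own proof has the same defect --- it writes the sum $g^{hk}(u^i_{;hik}+u^i_{;hki})$ of two third-order terms and then silently replaces it by a single $g^{hk}u^i_{;ihk}$ plus curvature terms --- so this is a discrepancy you should resolve explicitly rather than inherit.
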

\begin{proof}
We have:
\[
 \mathsf{div}({\bf L}{\bf u})=\mathsf{div}(\mathsf{div}(\nabla {\bf u}))+\mathsf{div}(\mathsf{div}(\nabla {\bf u})^{T})=g^{hk}(u^i_{;hik}+u^i_{;hki})\,.
\]
By applying the \emph{Ricci identity} to formula \eqref{lu} we get
\[
\mathsf{div}({\bf L}{\bf u})=2(g^{hk}u^i_{;ihk}+g^{hk}u^i_{;k}R_{hi}+g^{hk}u^i R_{hi;k})\,.
\]
On the one hand, the following formula will follow easily  from definition
\[
\Delta_B(\mathsf{div} ({\bf u}))=\Delta_B (u^i_{;i})=\mathsf{div}(\mathsf{grad}(u^i_{;i}))=g^{hk}u^i_{;ihk}\,.
\]
On the other hand, by applying formula \eqref{ricci}, we obtain
\[
\mathsf{div}({\bf Ri} ({\bf u}))=\mathsf{div}(u^h R_{hi})=g^{hk}u^i_{;k}R_{hi}+g^{hk}u^i R_{hi;k}\,.
\]
Consequently, we can show
\begin{align*}
\mathsf{div}(\mathsf{div}(\nabla {\bf u}))&=\mathsf{div}(u^i_{;hi})=g^{hk}u^i_{,hik}
\\
&=g^{hk}(u^i_{;ihk}+u^i_{;k}R_{hi}+u^i R_{hi;k})\,,
\end{align*}
which proves $\mathsf{div}(\mathsf{div}(\nabla {\bf u}))=\frac{1}{2}\mathsf{div}({\bf L}{\bf u})$\,.
\end{proof}

\section{Incompressibility of flow with static pressure field}
\label{app:incompressibile_steady_state}

In this study, we assume that ${\bf u}_{,t}=0$. If $p = p({\bf x}; \cdot)$ is a static pressure distribution between $0$ and $t$, and ${\bf u}({\bf x};0) = {\bf u}_0$ with $\nabla \cdot {\bf u}_0 = 0$, the velocity field $\hat{\bf u} = {\bf u}- {\bf u}_0$ such that $\hat{\bf u}({\bf x};0) = 0$, can be obtained as follows
\[
   \hat{\bf u}({\bf x}; t) = \lim_{k \to \infty} \hat{\bf u}_{k} ({\bf x}; t)
\] 
where 
\[
\hat{\bf u}_{k}=({\bf I}+\Delta t \, \frac{\mu}{\rho} {\bf L} ) \, \hat{\bf u}_{k-1}+\hat{\bf u}_{1} \quad  \hbox{with} \quad \Delta t = \frac{t}{k},
\]
for $ k = 1, 2, \ldots$ and $\nabla \cdot \hat{\bf u}_k = 0$, which follows inductively from (\ref{pp}). Induction implies further that $\hat{\bf u}_{1} = {\rho}^{-1} \Delta t \, ({\rho} \, {\bf f} - \nabla p)$ and 
\[
\hat{\bf u}_{k} = \sum_{\ell = 1}^k ({\bf I}+\Delta t \, \frac{\mu}{\rho} {\bf L})^{\ell-1} \, \hat{\bf u}_1.
\]
Consequently, by the geometric series formula $({\bf I} - {\bf A})^{-1} = \sum_{\ell = 1}^\infty {\bf A}^{\ell -1}$ it holds that
{\setlength\arraycolsep{2pt} \begin{eqnarray}
\hat{\bf u} & = & \lim_{k \to \infty} \hat{\bf u}_{k} = \sum_{\ell = 1}^\infty ({\bf I}+\Delta t \, \frac{\mu}{\rho} {\bf L})^{\ell-1} \, \hat{\bf u}_1 \nonumber \\ & = & ({\bf I} - {\bf I} - \Delta t \, \frac{\mu}{\rho} {\bf L})^{-1} \hat{\bf u}_1  = - \frac{1}{\mu}  {\bf L}^{-1} \, ({\rho} \, {\bf f} - \nabla p)
\end{eqnarray}}
Substituting ${\bf u} = \hat{\bf u} + {\bf u}_0$, we have 
\[
{\bf u} = {\bf u}_0 + \frac{1}{\mu}  {\bf L}^{-1} \, (\nabla p - {\rho} \, {\bf f}) \quad \hbox{with} \quad \nabla \cdot {\bf u} = 0.
\]

\section{Length density of arterioles}

\label{app:arteriole_density}

The total microvessel count $\overline{\xi} = \overline{\xi}_a + \overline{\xi}_c + \overline{\xi}_v$ composed by the densities of  arterioles $\overline{\xi}_a$, capillaries $\overline{\xi}_c$  and venules $\overline{\xi}_v$ (Figure \ref{artery_arteriole_interface}), can be related to $\overline{\xi}_a$ based on the respective individual cross-sectional areas $A_a = \pi D_a^2 /4$, $A_c=\pi D_c^2 /4$ and $A_v = \pi D_v^2 /4$,  with $D_a$, $D_c$, $D_v$ denoting the diameters, and the relative fractions \cite{tu2015human} $\gamma_a$, $\gamma_c$ and $\gamma_v$, $\gamma_a + \gamma_c + \gamma_v = 1$ of the total area $A$ bound together via the following equation:
\begin{align*}
A  = \frac{A_a \overline{\xi}_a  }{\gamma_a} = \frac{A_c \overline{\xi}_c  }{\gamma_c} = \frac{A_v \overline{\xi}_v  }{\gamma_v}\,.
\end{align*}
It follows that 
\begin{align*}
\overline{\xi}_c = \frac{A_a \overline{\xi}_a \gamma_c}{A_c \gamma_a}\,, \quad \hbox{and} \quad \overline{\xi}_v = \frac{A_a \overline{\xi}_a \gamma_v}{A_v \gamma_a}\,,
\end{align*}
and, further, that
\begin{align*}
\overline{\xi}_a = \overline{\xi} \left( 1 + \frac{A_a \gamma_c}{A_c \gamma_a} + \frac{A_a \gamma_v}{A_v \gamma_a} \right)^{-1}\,.
\end{align*}



\section*{Acknowledgements}
The work of Maryam Samavaki and Sampsa Pursiainen is supported by the Academy of Finland Centre of Excellence (CoE) in Inverse Modelling and Imaging 2018–2025 (decision 336792) and project 336151; Yusuf Oluwatoki Yusuf was supported by the Magnus Ehrnrooth Foundation through the graduate student scholarship; Arash Zarrin Nia has been funded by a scholarship from the K. N. Toosi University of Technology; Santtu Söderholm's work has been funded by the ERA PerMED (PerEpi) project AoF 344712; Joonas Lahtinen's work has been funded by Väisälä Fund; Fernando Galaz Prieto's work has been funded by the ERA PerMed (PerEpi) project AoF 344712.



 \bibliographystyle{elsarticle-num} 
 \bibliography{cas-refs}





\end{document}